\newtheorem{theorem}{Theorem}
\newtheorem{proposition}{Proposition}
\newtheorem{corollary}{Corollary}
\newtheorem{lemma}{Lemma}
\def\vec#1{\mbox{\boldmath $#1$}}
\DeclareMathOperator{\argmax}{argmax}
\DeclareMathOperator{\diag}{{\rm diag}}
\begin{document}
\thispagestyle{plain}

\noindent{\LARGE{\bf Semiparametric Penalized Spline Regression}}

\vskip 3mm
\noindent {\LARGE TAKUMA YOSHIDA$^{1}$ AND KANTA NAITO$^{2}$}

\vskip 1mm

\noindent $^{1}${Graduate School of Science and Engineering,  Shimane University, Matsue, Japan}\\
 $^{2}${Department of Mathematics}, Shimane University, Matsue, Japan

\begin{abstract}
{\it
In this paper, we propose a new semiparametric regression estimator by using a hybrid technique of a parametric approach and a nonparametric penalized spline method. 
The overall shape of the true regression function is captured by the parametric part, while its residual is consistently estimated by the nonparametric part. 
Asymptotic theory for the proposed semiparametric estimator is developed, showing that its behavior is dependent on the asymptotics for the nonparametric penalized spline estimator as well as on the discrepancy between the true regression function and the parametric part.
As a naturally associated application of asymptotics, some criteria for the selection of parametric models are addressed.
Numerical experiments show that the proposed estimator performs better than  the existing kernel-based semiparametric estimator and the fully nonparametric estimator, and that the proposed criteria work well for choosing a reasonable parametric model.}
\end{abstract}

{\small
{\bf Keywords} {
Asymptotic theory;\ Bias reduction;\ $B$-spline;\ Parametric model;\ Penalized spline;\\ \ \ \ \ \ Semiparametric regression}

\vskip 3mm

{\bf Mathematics Subject Classification} { Primary 62G08; Secondary 41A15, 62G20 }

}
\vspace{3mm}

\section{Introduction}
 
There have been several nonparametric smoothing techniques used in regression problems, such as lowess, kernel smoothing, spline smoothing, wavelet, the series method, and so on.
The nonparametric estimators generally have consistency, which is an advantage of this approach.
Hence, if the nonparametric estimator is used, we can expect that the true regression can be captured as the sample size increases. 
However, because the form of a nonparametric estimator is sometimes complicated, the interpretation of the estimated structure might not be clear. 

On the other hand, in a parametric regression problem with the true regression function controlled by a finite-dimensional parameter vector, the estimated structure is easy to understand, however, the estimator does not have consistency.
Therefore, there are advantages and disadvantages associated with each of these approaches. 
This motivates us to consider a hybrid of parametric and nonparametric methods for the regression problem and we, in fact, introduce a semiparametric regression method so that the estimator has the advantages of both approaches.

The semiparametric method in this paper consists of two steps.
In the first step, we utilize an appropriate parametric estimator.
In the second step, we apply a certain nonparametric smoother to the residual data associated with the parametric estimator in the first step.
The parametric estimator in the first step and the nonparametric smoother in the second step are combined into the proposed semiparametric estimator.

Similar semiparametric approaches for smoothing have been developed by many authors. 
Hjort and Glad (1995) and Naito (2004) discussed similar methods in density estimation literature.
Glad (1998) and Naito (2002) addressed the semiparametric regression method. Martins et al. (2008) introduced general decomposition, including additive and multiplicative corrections in regression.
Recently, Fan et al. (2009) discussed the semiparametric approach in the framework of a generalized linear model. 
Note that the aforementioned works all used kernel smoothing in the second step estimation. 

Our proposal is to utilize the penalized spline method for residual smoothing in the second step. 
This is a typical technique used in nonparametric regression problems with sufficient fitness and appropriate smoothness, which was developed by O'Sullivan (1986) and Eilers and Marx (1996). 
Many of its applications are summarized in Ruppert, et al (2003).
Throughout this paper, the fully nonparametric penalized spline estimator is designated by NPSE, while the semiparametric penalized spline estimator, including the two-step manipulations mentioned above, is denoted by SPSE.
In this paper, the advantages of using the penalized spline method instead of the kernel method are described both theoretically and numerically. 
In particular, we found that the SPSE has better behavior than the semiparametric local linear estimator (SLLE) in simulation.

This paper is organized as follows.
We elaborate on the proposed SPSE in Section 2. 
Section 3 discusses the asymptotic properties of the SPSE, which can be obtained using a combination of the asymptotic results for the parametric estimator and for the NPSE developed by Claeskens et al. (2009). 
The asymptotic bias of the SPSE depends on the initial parametric model utilized in the first step. 
The form of the asymptotic bias suggests a method of choosing the parametric model for the first step. 
A theoretical comparison of SPSE with SLLE is also given in the context of asymptotic bias, which reveals that the use of the penalized spline rather than a kernel smoother in the second step is valid. 
In Section 4, some criteria for parametric model selection will be clarified. 
If a parametric model chosen by the criteria discussed in Section 4 is used as the parametric part of the SPSE, its asymptotic bias will become smaller than that of the NPSE. 
The results of a simulation are reported in Section 5. 
The simulation studies include checking the accuracy of the SPSE and comparing it with the NPSE and the SLLE as regression estimators.
The performance of the parametric model selection discussed in Section 4 is also investigated.
Related discussion and issues for future research are provided in Section 6. 
Proofs for the theoretical results are given in the Appendix.

\section{Semiparametric penalized spline estimator}

Consider the relationship of the dataset $\{(x_i,y_i):i=1,\cdots,n\}$ as the regression model
$$
y_i=f(x_i)+\varepsilon_i,\ \ i=1,\cdots,n,
$$
where the explanatory $x_i$ is generated from density $q(x)$ with its support on $[0,1]$, $f(x)=E[Y|X=x]$ is an unknown regression function, and the errors $\varepsilon_i$ are assumed to be uncorrelated with $E[\varepsilon_i|X_i=x_i]=0$ and $V[\varepsilon_i|X_i=x_i]=\sigma^2(x_i)<\infty$. 
Let $f(x|\vec{\beta}), \vec{\beta}\in B \subseteq \mathbb{R}^M$ be a parametric model. 
We now construct the semiparametric estimator of $f(x)$. 
First we obtain an appropriate estimator $\hat{\vec{\beta}}$ of $\vec{\beta}$ via a suitable method of estimation. 
Then $f(x)$ can be written as
\begin{eqnarray}
f(x)= f(x|\hat{\vec{\beta}})+f(x|\hat{\vec{\beta}})^{\gamma}r_\gamma(x,\hat{\vec{\beta}}), \label{uni}
\end{eqnarray}
where $r_\gamma(x,\vec{\beta})=\{f(x)-f(x|\vec{\beta})\}/f(x|\vec{\beta})^\gamma$ for some $\gamma\in\{0,1\}$. 
When $\gamma=0$, this decomposition becomes $f(x)=f(x|\hat{\vec{\beta}})+ \{f(x)-f(x|\hat{\vec{\beta}})\}$, which is called an additive correction. 
When $\gamma=1$, on the other hand, we have a multiplicative correction $f(x)=f(x|\hat{\vec{\beta}})\{f(x)/f(x|\hat{\vec{\beta}})\}$. 
By using the parameter $\gamma$, we can treat additive and multiplicative corrections systematically (see, Fan et al. (2009)).
In the second step, $r_\gamma(x,\hat{\vec{\beta}})$ is estimated by applying a nonparametric technique to $\{(x_i,\{y_i-f(x_i|\hat{\vec{\beta}})\}/f(x_i|\hat{\vec{\beta}})^\gamma):i=1,\cdots,n\}$.  
The SPSE is obtained as
\begin{eqnarray}
\hat{f}(x,\gamma)=f(x|\hat{\vec{\beta}})+f(x|\hat{\vec{\beta}})^{\gamma}\hat{r}_\gamma(x,\hat{\vec{\beta}}), \label{semiest}
\end{eqnarray}
where $\hat{r}_\gamma(x,\hat{\vec{\beta}})$ is a nonparametric estimator of $r_\gamma(x,\hat{\vec{\beta}})$. 

We adopt the penalized spline to estimate $r_{\gamma}(x,\hat{\vec{\beta}})$.  
Let $\{B_{-p+1}^{[p]}(x),\cdots,B_{K_n}^{[p]}(x)\}$ be a marginal $B$-spline basis of degree $p$ with equally spaced knots $\kappa_k=k/K_n (k=-p+1,\cdots,K_n+p)$. Then we consider the $B$-spline model
$$
\sum_{k=-p+1}^{K_n} B_k^{[p]}(x)b_{k}
$$ 
as an approximation to $r_\gamma(x,\hat{\vec{\beta}})$, where 
$b_{k}$'s are unknown parameters. 
The definition and fundamental properties of the $B$-spline basis are detailed in de Boor (2001). 
Let $\vec{R}_{\gamma}$ be the $n$-vector with $i$th element $\{y_i-f(x_i|\hat{\vec{\beta}})\}/f(x_i|\hat{\vec{\beta}})^\gamma$ and let 
$Z=(B^{[p]}_{-p+j}(x_i))_{ij}$ and $\vec{b}=(b_{-p+1}\ \cdots\ b_{Kn})^\prime$. 
The penalized spline estimator $\hat{\vec{b}}=(\hat{b}_{-p+1}\ \cdots\ \hat{b}_{K_n})^\prime$ of $\vec{b}$ is defined as the minimizer of 
\begin{eqnarray*}
(\vec{R}_\gamma-Z\vec{b})^\prime(\vec{R}_\gamma-Z\vec{b})+\lambda_n \vec{b}^\prime Q_m \vec{b},
\end{eqnarray*}
where $\lambda_n$ is the smoothing parameter and $Q_m$ is the $m$th difference matrix.
The estimator of $r_\gamma(x,\hat{\vec{\beta}})$ is defined as 
\begin{eqnarray}
\hat{r}_\gamma(x,\hat{\vec{\beta}})=\sum_{k=-p+1}^{K_n} B_k^{[p]}(x)\hat{b}_{k}=\vec{B}(x)^\prime (Z^\prime Z+\lambda_n Q_m)^{-1}Z^\prime \vec{R}_{\gamma}, \label{semiPsp}
\end{eqnarray}
where $\vec{B}(x)=(B^{[p]}_{-p+1}(x)\ \cdots\ B^{[p]}_{K_n}(x))^\prime$. 

In Figure 1, an example of the SPSE is drawn. 
In the left panel, the true function $f(x)=\exp[-x^2]\sin(2\pi x)$ and the least square estimator $f(x|\hat{\vec{\beta}})$ of $f(x|\vec{\beta})=\beta_0+\beta_1x+\beta_2x^2+\beta_3x^3$ are shown.  
In the middle panel, the residuals of $f(x|\hat{\vec{\beta}})$ and the penalized spline estimator of $r_0(x,\hat{\vec{\beta}})$ are drawn. 
In the right panel, the true function and the SPSE as given in (\ref{semiest}) are drawn. 
As the interpretation of $\hat{f}(x)$ for this example, the parametric part captures the overall shape of $f(x)$ and the nonparametric part explains details which could not be captured by the $f(x|\hat{\vec{\beta}})$. 
Similarly, we can construct an SPSE with multiplicative correction. 

\begin{figure}
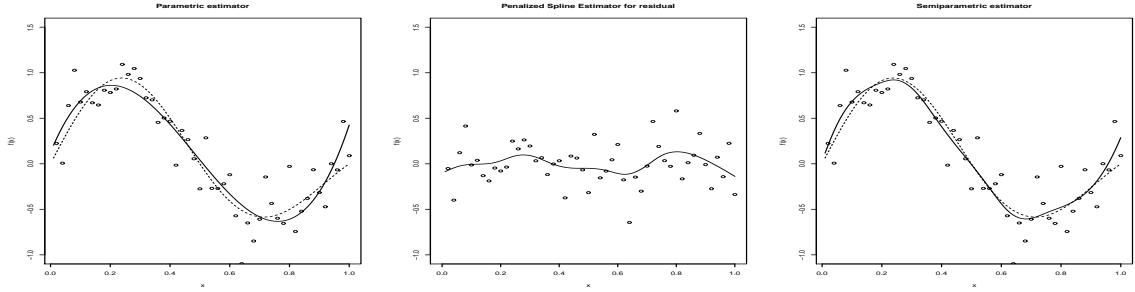

\begin{center}
\includegraphics[width=50mm,height=40mm]{sample1.eps}
\includegraphics[width=50mm,height=40mm]{sample2.eps}
\includegraphics[width=50mm,height=40mm]{sample3.eps}
\end{center}
\caption{Plots for one random sample of true $f(x)$ (dashed) and the parametric estimator $f(x|\hat{\vec{\beta}})$ (solid) in the left panel, the residuals and the penalized spline estimator of $\hat{r}_0(x,\hat{\vec{\beta}})$ (solid) in the middle panel, and the true $f(x)$ (dashed) and the SPSE $\hat{f}(x,0)$ (solid) in the right panel.}
\end{figure}


\section{Asymptotic Result}

Asymptotics for the NPSE were developed by Claeskens et al. (2009). 
By using their results, we show the asymptotic bias and variance, and asymptotic distribution of the SPSE.
We now give some assumptions regarding the asymptotics of the SPSE. 
\\

{\noindent \bf Assumptions}
\begin{enumerate}
\item There exists $a>0$ such that $a<f(x|\vec{\beta})$ for all $x\in [0,1]$, $\vec{\beta}\in B$. 
\item $\sup_{z\in[0,1]}\{q(z)\}<\infty$.
\item $|\partial f(x|\vec{\beta})/\partial \beta_i|<\infty,\ {\rm for}\ x\in [0,1]$, $\vec{\beta}\in B$, $i=1,\cdots,m.$
\item $|\partial^2 f(x|\vec{\beta})/\partial \beta_i\partial\beta_j|<\infty, \ {\rm for}\ x\in [0,1]$, $\vec{\beta}\in B$, $i,j=1,\cdots,m.$
\item $|d^i f(x)/d x^i|<\infty,\ {\rm for}\ x\in [0,1]$, $i=1,\cdots,p+1$.
\item $K_n=o(n^{1/2})$ and $\lambda_n=o(nK_n^{-1})$.
\end{enumerate}

\noindent
Define the $(K_{n}+p)\times(K_{n}+p)$ matrix $G(q)=(g_{ij})_{ij}$, where 
$$
g_{ij}=\int_0^1 B^{[p]}_{-p+i}(u)B^{[p]}_{-p+j}(u)q(u)du
$$
and the $(K_{n}+p)\times(K_{n}+p)$ matrix $G(\sigma,\beta,\gamma,q)=(g_{\sigma,ij})_{ij}$, where 
$$
g_{\sigma,ij}=\int_0^1 B^{[p]}_{-p+i}(u)B^{[p]}_{-p+j}(u)\frac{\sigma^2(u)q(u)}{f(u|\vec{\beta})^{2\gamma}}du.
$$
Let $\vec{b}^*(\vec{\beta},\gamma)$ be a best $L_\infty$ approximation to $(f(x)-f(x|\vec{\beta}))/f(x|\vec{\beta})^\gamma$. 
This means that $\vec{b}^*(\vec{\beta},\gamma)$ satisfies
$$
\sup_{x\in(0,1)}\left|\frac{f(x)-f(x|\vec{\beta})}{f(x|\vec{\beta})^\gamma}+b_{a1}(x|\vec{\beta},\gamma)-\vec{B}(x)^\prime \vec{b}^*(\vec{\beta},\gamma)\right|=o(K_n^{-(p+1)}),
$$
where 
$$
b_{a1}(x|\vec{\beta},\gamma)=-\left(\frac{f(x)-f(x|\vec{\beta})}{f(x|\vec{\beta})^\gamma}\right)^{(p+1)}\frac{1}{K_n^{p+1} (p+1)!}\sum_{j=1}^{K_n}I(\kappa_{j-1}\leq x<\kappa_j)B_{p+1}\left(\frac{x-\kappa_{j-1}}{K_n^{-1}}\right),
$$
$I(a<x<b)$ is the indicator function of the interval $(a,b)$ and $B_p(x)$ is  the $p$th Bernoulli polynomial. 

We now discuss a condition of the parametric estimator. 
Let $F$ be the true distribution of $(X,Y)$ and let $F_n$ be the corresponding empirical distribution. 
The estimator $\hat{\vec{\beta}}$ of $\vec{\beta}$ is defined as the functional form $\hat{\vec{\beta}}=T(F_n)$, where $T(\cdot)$ is a real valued function defined on the set of all distributions. 
We can then see that $\lim_{n\rightarrow \infty} \hat{\vec{\beta}}\rightarrow \vec{\beta}_0$, where $\vec{\beta}_0=T(F)$ is defined as the optimizer of some distance measure $\rho$. 
We assume that $f(x|\vec{\beta}_0)$ is the best approximation of $f(x)$.
By the definition of $\hat{\vec{\beta}}$, $\hat{\vec{\beta}}-\vec{\beta}_0$ can be expressed as
\begin{eqnarray}
\hat{\vec{\beta}}-\vec{\beta}_0=\frac{1}{n}\sum_{i=1}^n I(X_i,Y_i)+\frac{d}{n}+\delta_n, \label{ex}
\end{eqnarray}
where $I(X_i,Y_i)$ is the influence function defined as 
$$
I(X,Y)=\lim_{\varepsilon\rightarrow 0}\left\{\frac{T((1-\varepsilon)F+\varepsilon \delta(X,Y))-T(F)}{\varepsilon}\right\}
$$
with $E[I(X_i,Y_i)]=0$ and finite covariance matrix, the delta function $\delta(X,Y)$ has probability 1 at a point $(X,Y)$, and $d$ is the bias of $\hat{\vec{\beta}}$. 
The remaining term $\delta_n$ has mean $O(n^{-2})$ for each component. 
 
We investigate the asymptotic property of $\hat{f}(x,\gamma)$ by a two-step procedure for clarity. 
First we derive the asymptotic expectation and variance of $\hat{f}_0(x,\gamma)=f(x|\vec{\beta}_0)+f(x|\vec{\beta}_0)^\gamma \hat{r}_\gamma(x,\vec{\beta}_0)$. 
Here, $\hat{r}_\gamma(x,\vec{\beta}_0)$ is the penalized spline smoother of $r_\gamma(x,\vec{\beta}_0)$.
Second, we show that the difference between $\hat{f}(x,\gamma)$ and $\hat{f}_0(x,\gamma)$ vanishes asymptotically.  
Since $\vec{\beta}_0$ is no longer stochastic, the asymptotic property of $\hat{f}_0(x,\gamma)$ is dependent only on the nonparametric penalized spline estimator of $r_\gamma(x,\vec{\beta}_0)$. 
Hence we obtain 
\begin{eqnarray*}
E[\hat{f}_0(x,\gamma)|\vec{X}_n]&=&f(x|\vec{\beta}_0)+f(x|\vec{\beta}_0)^\gamma E[\hat{r}_\gamma(x,\vec{\beta}_0)],\\
V[\hat{f}_0(x,\gamma)|\vec{X}_n]&=&f(x|\vec{\beta}_0)^{2\gamma} V[\hat{r}_\gamma(x,\vec{\beta}_0)].
\end{eqnarray*}
Here for a random variable $U_n$, $E[U_n|\vec{X}_n]$ and $V[U_n|\vec{X}_n]$ are the conditional expectation and variance of $U_n$ given $(X_1,\cdots,X_n)=(x_1,\cdots,x_n)$.
The asymptotic property of $\hat{r}_\gamma(x,\vec{\beta}_0)$ can be directly obtained by using Theorem 2 (a) of Claeskens et al. (2009).

\begin{proposition}\label{fix}
Let $f\in C^{p+1}, f(\cdot|\vec{\beta})\in C^{p+1}$. 
Then, under the Assumptions, for a fixed $x\in(0,1)$, 
\begin{eqnarray*}
E[\hat{f}_0(x,\gamma)|\vec{X}_n]&=&f(x)+b_a(x|\vec{\beta}_0,\gamma)+b_\lambda(x|\vec{\beta}_0,\gamma)+o_P(K_n^{-(p+1)})+o_P(\lambda_nK_nn^{-1}),\\
V[\hat{f}_0(x,\gamma)|\vec{X}_n]&=&\frac{f(x|\vec{\beta}_0)^{2\gamma}}{n}\vec{B}(x)^\prime G(q)^{-1}G(\sigma,\beta_0,\gamma,q) G(q)^{-1}\vec{B}(x)+o_P(K_nn^{-1}),
\end{eqnarray*}
where 
\begin{eqnarray*}
b_{a}(x|\vec{\beta}_0,\gamma)&=&-\frac{f(x|\vec{\beta}_0)r_\gamma^{(p+1)}(x|\vec{\beta}_0)}{K_n^{p+1} (p+1)!}\sum_{j=1}^{K_n}I(\kappa_{j-1}\leq x<\kappa_j)B_{p+1}\left(\frac{x-\kappa_{j-1}}{K_n^{-1}}\right),\\
b_{\lambda }(x|\vec{\beta}_0,\gamma)&=&-\frac{\lambda_n}{n}f(x|\vec{\beta}_0)^\gamma\vec{B}(x)^\prime G(q)^{-1}Q_m \vec{b}^*(\vec{\beta}_0,\gamma).
\end{eqnarray*}

\end{proposition}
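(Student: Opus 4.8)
The plan is to reduce the statement to Theorem~2~(a) of Claeskens et al. (2009) by viewing $\hat{r}_\gamma(x,\vec{\beta}_0)$ as the NPSE in an auxiliary regression problem and then pushing the resulting bias and variance expansions through the deterministic affine map $u\mapsto f(x|\vec{\beta}_0)+f(x|\vec{\beta}_0)^\gamma u$. Since $\vec{\beta}_0=T(F)$ is non-stochastic, conditioning on $\vec{X}_n$ acts only on $\hat{r}_\gamma(x,\vec{\beta}_0)$, so
\begin{eqnarray*}
E[\hat{f}_0(x,\gamma)|\vec{X}_n]&=&f(x|\vec{\beta}_0)+f(x|\vec{\beta}_0)^\gamma E[\hat{r}_\gamma(x,\vec{\beta}_0)|\vec{X}_n],\\
V[\hat{f}_0(x,\gamma)|\vec{X}_n]&=&f(x|\vec{\beta}_0)^{2\gamma}\,V[\hat{r}_\gamma(x,\vec{\beta}_0)|\vec{X}_n],
\end{eqnarray*}
and by Assumption~1 the multipliers $f(x|\vec{\beta}_0)^\gamma$ and $f(x|\vec{\beta}_0)^{2\gamma}$ are fixed constants bounded below by $a^\gamma>0$, so they do not affect any $o_P$ remainder.

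Next I would note that $\hat{r}_\gamma(x,\vec{\beta}_0)$, i.e. the estimator (\ref{semiPsp}) with $\hat{\vec{\beta}}$ replaced by the fixed $\vec{\beta}_0$, is exactly the penalized spline fit in the regression model $R_{\gamma,i}=r_\gamma(x_i,\vec{\beta}_0)+\tilde{\varepsilon}_i$, where $R_{\gamma,i}=\{y_i-f(x_i|\vec{\beta}_0)\}/f(x_i|\vec{\beta}_0)^\gamma$, the regression function is $r_\gamma(\cdot,\vec{\beta}_0)=\{f(\cdot)-f(\cdot|\vec{\beta}_0)\}/f(\cdot|\vec{\beta}_0)^\gamma$, and $\tilde{\varepsilon}_i=\varepsilon_i/f(x_i|\vec{\beta}_0)^\gamma$ satisfies $E[\tilde{\varepsilon}_i|X_i=x_i]=0$ and $V[\tilde{\varepsilon}_i|X_i=x_i]=\sigma^2(x_i)/f(x_i|\vec{\beta}_0)^{2\gamma}=:\tilde{\sigma}^2(x_i)$. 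I would then verify that the Assumptions, together with $f\in C^{p+1}$ and $f(\cdot|\vec{\beta})\in C^{p+1}$, imply the regularity conditions of Theorem~2~(a) of Claeskens et al. (2009) for this auxiliary problem: $r_\gamma(\cdot,\vec{\beta}_0)\in C^{p+1}$ (from the $C^{p+1}$ hypotheses, Assumptions~3--5 and $f(x|\vec{\beta}_0)\ge a>0$, which keeps $1/f(\cdot|\vec{\beta}_0)^\gamma$ and its first $p+1$ derivatives bounded); $\tilde{\sigma}^2$ finite on $[0,1]$; the design condition via Assumption~2; and the rate conditions $K_n=o(n^{1/2})$, $\lambda_n=o(nK_n^{-1})$ via Assumption~6.

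Applying Theorem~2~(a) of Claeskens et al. (2009) to the auxiliary problem then yields
\begin{eqnarray*}
E[\hat{r}_\gamma(x,\vec{\beta}_0)|\vec{X}_n]&=&r_\gamma(x,\vec{\beta}_0)+b_{a1}(x|\vec{\beta}_0,\gamma)-\frac{\lambda_n}{n}\vec{B}(x)^\prime G(q)^{-1}Q_m\vec{b}^*(\vec{\beta}_0,\gamma)\\
&&+\,o_P(K_n^{-(p+1)})+o_P(\lambda_nK_nn^{-1}),\\
V[\hat{r}_\gamma(x,\vec{\beta}_0)|\vec{X}_n]&=&\frac{1}{n}\vec{B}(x)^\prime G(q)^{-1}G(\sigma,\vec{\beta}_0,\gamma,q)G(q)^{-1}\vec{B}(x)+o_P(K_nn^{-1}),
\end{eqnarray*}
the key point being that the variance-weighted Gram matrix that Claeskens et al.'s theorem attaches to the error variance $\tilde{\sigma}^2$ is precisely $G(\sigma,\vec{\beta}_0,\gamma,q)$, and that the leading spline-approximation bias is the $b_{a1}$ term recorded before the statement. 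Substituting these two expansions into the identities of the first paragraph and invoking the exact decomposition $f(x|\vec{\beta}_0)+f(x|\vec{\beta}_0)^\gamma r_\gamma(x,\vec{\beta}_0)=f(x)$ from (\ref{uni}) gives the claimed formulas, with $b_a(x|\vec{\beta}_0,\gamma)$ arising as $f(x|\vec{\beta}_0)^\gamma$ times $b_{a1}(x|\vec{\beta}_0,\gamma)$ and $b_\lambda(x|\vec{\beta}_0,\gamma)$ as $f(x|\vec{\beta}_0)^\gamma$ times the shrinkage-bias term, and the bounded multipliers absorbed into the $o_P$ remainders.

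Because Claeskens et al.'s result is used as a black box, I do not expect a deep obstacle; the only step needing real care is the verification that the transformed regression problem meets that theorem's hypotheses. The delicate case is $\gamma=1$, where one must bound the first $p+1$ derivatives of the quotient $\{f-f(\cdot|\vec{\beta}_0)\}/f(\cdot|\vec{\beta}_0)$ --- which is exactly what Assumption~1 is designed to secure, via the quotient and Leibniz rules together with the boundedness of the numerator's derivatives from Assumption~5 and the hypothesis $f(\cdot|\vec{\beta})\in C^{p+1}$. A secondary bookkeeping point is merely to confirm that the stated $o_P$ orders are the ones delivered by Claeskens et al. at a fixed interior point $x$.
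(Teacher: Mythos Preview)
Your proposal is correct and follows essentially the same route as the paper: both reduce the statement to Theorem~2~(a) of Claeskens et~al.\ (2009) by recognizing $\hat{r}_\gamma(x,\vec{\beta}_0)$ as a penalized spline fit to the auxiliary regression with target $r_\gamma(\cdot,\vec{\beta}_0)$ and error variance $\sigma^2(\cdot)/f(\cdot|\vec{\beta}_0)^{2\gamma}$, and then push the result through the deterministic affine map. The only cosmetic difference is that the paper derives the variance formula by writing out the quadratic form and identifying the $(i,j)$-entry of $n^{-1}Z'\,\mathrm{diag}[\tilde{\sigma}^2(x_k)]\,Z$ with $G(\sigma,\vec{\beta}_0,\gamma,q)$ directly, whereas you invoke Claeskens et~al.'s theorem for the variance as well; either way yields the same expression.
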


We now give the asymptotic result for $\hat{f}(x,\gamma)$. 
By using (\ref{ex}), $f(x|\hat{\vec{\beta}})$ and $\hat{r}_\gamma(x,\hat{\vec{\beta}})$ are expanded about $f(x|\vec{\beta}_0)$ and $\hat{r}_\gamma(x,\vec{\beta}_0)$, respectively. 
From the details of the proof in the Appendix, we find that the asymptotic expectation and variance of $\hat{f}(x,\gamma)$ are dominated by those of $\hat{f}_0(x,\gamma)$ and we obtain the following theorem. 

\begin{theorem}\label{est}
Let $f\in C^{p+1}, f(\cdot|\vec{\beta}_0)\in C^{p+1}$. 
Then under the Assumptions, for a fixed $x\in(0,1)$, 
\begin{eqnarray*}
E[\hat{f}(x,\gamma)|\vec{X}_n]&=&f(x)+b_a(x|\vec{\beta}_0,\gamma)+b_\lambda(x|\vec{\beta}_0,\gamma)\\
&&+ O_P(n^{-1})+o_P(K_n^{-(p+1)})+o_P(\lambda_nK_nn^{-1}),\\
V[\hat{f}(x,\gamma)|\vec{X}_n]&=&\frac{f(x|\vec{\beta}_0)^{2\gamma}}{n}\vec{B}(x)^\prime G(q)^{-1}G(\sigma,\beta_0,\gamma,q) G(q)^{-1}\vec{B}(x)+o_P(K_nn^{-1}),
\end{eqnarray*}
where $b_{a}(x|\vec{\beta}_0,\gamma)$ and $b_{\lambda }(x|\vec{\beta}_0,\gamma)$ are those given in Proposition \ref{fix}. 
\end{theorem}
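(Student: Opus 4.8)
\medskip
\noindent\textbf{Proof proposal.}
The plan is to deduce Theorem~\ref{est} from Proposition~\ref{fix} by showing that the difference $D_n(x):=\hat f(x,\gamma)-\hat f_0(x,\gamma)$ is negligible at the level of conditional moments: uniformly in $x\in(0,1)$,
$$
E[D_n(x)\,|\,\vec X_n]=O_P(n^{-1})+o_P(K_n^{-(p+1)})+o_P(\lambda_nK_nn^{-1}),\qquad V[D_n(x)\,|\,\vec X_n]=o_P(K_nn^{-1}).
$$
Granting this, the two expansions follow from those of $\hat f_0(x,\gamma)$ in Proposition~\ref{fix}, since $\mathrm{Cov}(\hat f_0(x,\gamma),D_n(x)\,|\,\vec X_n)=o_P(K_nn^{-1})$ by the Cauchy--Schwarz inequality (Proposition~\ref{fix} gives $V[\hat f_0(x,\gamma)\,|\,\vec X_n]=O_P(K_nn^{-1})$). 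Write $\mathcal{S}[\vec v](x)=\vec B(x)^\prime(Z^\prime Z+\lambda_nQ_m)^{-1}Z^\prime\vec v$, so that $\hat r_\gamma(x,\vec\beta)=\mathcal{S}[\vec R_\gamma(\vec\beta)](x)$ with $\vec R_\gamma(\vec\beta)$ the $n$-vector of $i$th entry $\{y_i-f(x_i|\vec\beta)\}/f(x_i|\vec\beta)^\gamma$; then
$$
D_n(x)=\bigl\{f(x|\hat{\vec\beta})-f(x|\vec\beta_0)\bigr\}+\bigl\{f(x|\hat{\vec\beta})^\gamma\,\hat r_\gamma(x,\hat{\vec\beta})-f(x|\vec\beta_0)^\gamma\,\hat r_\gamma(x,\vec\beta_0)\bigr\}.
$$

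By (\ref{ex}), Assumption~2 and finiteness of the covariance of the influence function, $\hat{\vec\beta}-\vec\beta_0=O_P(n^{-1/2})$, while $d/n$ and $\delta_n$ in (\ref{ex}) contribute only $O_P(n^{-1})$. Using Assumptions~3--4 I would Taylor expand about $\vec\beta_0$ each appearance of $f(x|\hat{\vec\beta})$, $f(x|\hat{\vec\beta})^\gamma$ and $f(x_i|\hat{\vec\beta})$ above (the last entering through $\vec R_\gamma(\hat{\vec\beta})$), keeping only the term linear in $\hat{\vec\beta}-\vec\beta_0$ and bounding the second-order remainders by $O_P(n^{-1})$ uniformly in $x$; these remainders still contribute only $O_P(n^{-1})$ after the multiplications by $O_P(1)$ quantities and the applications of $\mathcal{S}$ that occur. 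Writing $\vec\phi(x)=\partial f(x|\vec\beta_0)/\partial\vec\beta$ and $\Phi$ for the $n\times M$ matrix with $i$th row $\partial f(x_i|\vec\beta_0)/\partial\vec\beta^\prime$, this reduces $D_n(x)$, modulo $O_P(n^{-1})$, to a term linear in $\hat{\vec\beta}-\vec\beta_0$ whose coefficient is $\vec\phi(x)^\prime-\mathcal{S}[\Phi](x)$ when $\gamma=0$, and when $\gamma=1$ is the sum of the contributions of $f(x|\hat{\vec\beta})$, of $\{f(x|\hat{\vec\beta})-f(x|\vec\beta_0)\}\hat r_1(x,\hat{\vec\beta})$, and of $f(x|\vec\beta_0)\{\hat r_1(x,\hat{\vec\beta})-\hat r_1(x,\vec\beta_0)\}$; in the last, differentiating $f(x_i|\vec\beta)^{-1}$ inside $\vec R_1$ yields a factor $y_i=f(x_i)+\varepsilon_i$, which I split into a deterministic and a mean-zero piece.

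The key input is that $\mathcal{S}$ nearly reproduces smooth functions: by the $B$-spline approximation bounds behind Theorem~2(a) of Claeskens et al.~(2009) (de Boor, 2001), $\mathcal{S}[(g(x_i))_i](x)=g(x)+O_P(K_n^{-(p+1)})+O_P(\lambda_nK_nn^{-1})$ uniformly in $x$ for $g\in C^{p+1}[0,1]$; here I would strengthen the hypotheses to require $\partial f(\cdot|\vec\beta_0)/\partial\beta_j\in C^{p+1}$ for each $j$. Applying this to the columns of $\Phi$, the $\gamma=0$ coefficient equals $O(K_n^{-(p+1)}+\lambda_nK_nn^{-1})$, and for $\gamma=1$ the three deterministic contributions combine to $\vec\phi(x)^\prime\{1+r_1(x,\vec\beta_0)-f(x)/f(x|\vec\beta_0)\}$, which is identically zero, again up to $O(K_n^{-(p+1)}+\lambda_nK_nn^{-1})$; multiplied by $\hat{\vec\beta}-\vec\beta_0=O_P(n^{-1/2})$ this yields $o_P(K_n^{-(p+1)})+o_P(\lambda_nK_nn^{-1})$, already inside the error of Proposition~\ref{fix}. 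The only remaining piece, present just for $\gamma=1$, is $\mathcal{S}$ applied to the $n$-vector with $i$th entry $\varepsilon_i\,\partial_{\vec\beta}\{f(x_i|\vec\beta_0)^{-1}\}^\prime(\hat{\vec\beta}-\vec\beta_0)$; substituting (\ref{ex}), the only surviving term in its conditional mean is $n^{-1}$ times $\mathcal{S}$ applied to the bounded sequence $\bigl(\mathrm{Cov}(\varepsilon_i,I(X_i,Y_i)\,|\,X_i)\,\partial_{\vec\beta}\{f(x_i|\vec\beta_0)^{-1}\}^\prime\bigr)_i$, hence is $O_P(n^{-1})$, while its conditional variance is $O_P(K_nn^{-2})=o_P(K_nn^{-1})$. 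Collecting the estimates gives the two displayed bounds for $D_n(x)$, and Proposition~\ref{fix} completes the proof.

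I expect the crux to be this third paragraph: the genuine $O_P(n^{-1/2})$ sampling fluctuation of $\hat{\vec\beta}$ must be demoted, through the exact algebraic cancellations above combined with the near-reproduction property of $\mathcal{S}$, to the harmless orders $O_P(n^{-1})$, $o_P(K_n^{-(p+1)})$ and $o_P(\lambda_nK_nn^{-1})$; making this rigorous requires sharp, \emph{uniform-in-$x$} control of the penalized-spline operator applied both to the parametric $\vec\beta$-derivatives and to the noise perturbation, which in turn rests on keeping the relevant norms of $(Z^\prime Z+\lambda_nQ_m)^{-1}Z^\prime$ under control at the knot and penalty rates of Assumption~6.
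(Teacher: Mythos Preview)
Your proposal is correct in outline but follows a genuinely different route from the paper's own proof. Both arguments start from the same Taylor expansion of $\hat f(x,\gamma)$ about $\vec\beta_0$ and both reduce to controlling the linear and quadratic terms in $\hat{\vec\beta}-\vec\beta_0$; the difference is \emph{how} the linear term is neutralized.

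The paper never invokes the near-reproduction property of the smoother on the $\vec\beta$-derivatives. Instead it writes $\hat f^{(1)}(x,\gamma)=f^{(1)}(x|\vec\beta_0)+\sum_j\{\vec B(x_j)^\prime\Lambda^{-1}\vec B(x)\}\,r_\gamma^{(1)}(y_j|\vec\beta_0)$ and computes $E[\hat f^{(1)}(x,\gamma)^\prime(\hat{\vec\beta}-\vec\beta_0)\mid\vec X_n]$ directly by substituting the influence-function expansion (\ref{ex}); the mean-zero property of $I(X,Y)$ and the diagonal pairing $\alpha=j$ in the cross terms then deliver the $O_P(n^{-1})$ bound. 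The same device handles the quadratic term and the variance calculations. This uses only Assumptions 3--4 (boundedness of $\partial f/\partial\beta_i$ and $\partial^2 f/\partial\beta_i\partial\beta_j$), exactly as stated.

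Your route is more structural: you show that the \emph{coefficient} of $\hat{\vec\beta}-\vec\beta_0$ is itself small, because $\mathcal{S}$ applied to the columns of $\Phi$ reproduces $\vec\phi(x)^\prime$ up to the spline bias rates, and for $\gamma=1$ the surviving deterministic pieces cancel algebraically. This gives a cleaner explanation of \emph{why} the first-step randomness is harmless---the smoother absorbs the parametric shift---and the noise piece you isolate for $\gamma=1$ is handled via (\ref{ex}) much as the paper does. The price is the extra regularity you flag: you need $\partial f(\cdot|\vec\beta_0)/\partial\beta_j\in C^{p+1}$ (and, for $\gamma=1$, the same for $f(\cdot)\,\partial_{\vec\beta}\{f(\cdot|\vec\beta_0)^{-1}\}$), which goes beyond the theorem's hypotheses and Assumptions 3--4. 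So your argument proves a slightly weaker statement than the one on record, while the paper's moment-only approach matches the stated assumptions exactly.
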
  

Theorem \ref{est} and Lyapunov's theorem yield the asymptotic distribution of the SPSE.

\begin{theorem}\label{norm}
Suppose that $E[|\varepsilon_i|^{2+\delta}|X_i=x_i]<C$ for some $\delta\geq 2$ and the Assumptions are satisfied. 
Then, using $K_n=O(n^{1/(2p+1)})$ and $\lambda_n=O(n^{p/(2p+1)})$, 
\begin{eqnarray*}
\frac{\hat{f}(x,\gamma)-f(x)-b_a(x|\vec{\beta}_0,\gamma)-b_\lambda(x|\vec{\beta}_0,\gamma)}{\sqrt{V[\hat{f}(x,\gamma)|\vec{X}_n]}}
\xrightarrow {D}N(0,1),
\end{eqnarray*}
where $b_{a}(x|\vec{\beta}_0,\gamma)$ and $b_{\lambda }(x|\vec{\beta}_0,\gamma)$ are those given in Proposition \ref{fix}.
\end{theorem}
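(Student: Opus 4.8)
The plan is to isolate the part of $\hat{f}(x,\gamma)$ that is a linear combination of the errors, to check that every other contribution is of smaller order than $\sqrt{V[\hat{f}(x,\gamma)|\vec{X}_n]}$, and to apply Lyapunov's central limit theorem to that linear part conditionally on $\vec{X}_n$. From the proof of Theorem \ref{est} one extracts the decomposition
\begin{eqnarray*}
\hat{f}(x,\gamma)=E[\hat{f}(x,\gamma)|\vec{X}_n]+S_n+\Delta_n,\qquad
S_n=f(x|\vec{\beta}_0)^\gamma \vec{B}(x)^\prime(Z^\prime Z+\lambda_nQ_m)^{-1}Z^\prime\vec{u},
\end{eqnarray*}
where $\vec{u}$ is the $n$-vector with $i$th component $\varepsilon_i/f(x_i|\vec{\beta}_0)^\gamma$ (so that $S_n=\hat{f}_0(x,\gamma)-E[\hat{f}_0(x,\gamma)|\vec{X}_n]$ is the mean-zero stochastic part of $\hat{f}_0(x,\gamma)$) and $\Delta_n$ collects the effect of evaluating the smoother and the pre-factor at $\hat{\vec{\beta}}$ instead of $\vec{\beta}_0$. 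By $\hat{\vec{\beta}}-\vec{\beta}_0=O_P(n^{-1/2})$ from (\ref{ex}) together with Assumptions 3 and 4, the stochastic fluctuation of $\Delta_n$ is $O_P(n^{-1/2})=o_P(\sqrt{K_n/n})$ since $K_n\to\infty$. Combining this with Theorem \ref{est}, which gives $E[\hat{f}(x,\gamma)|\vec{X}_n]=f(x)+b_a(x|\vec{\beta}_0,\gamma)+b_\lambda(x|\vec{\beta}_0,\gamma)+O_P(n^{-1})+o_P(K_n^{-(p+1)})+o_P(\lambda_nK_nn^{-1})$, and noting that $V[\hat{f}(x,\gamma)|\vec{X}_n]$ is of exact order $K_n/n$ under Assumptions 1 and 2, a direct check with $K_n=O(n^{1/(2p+1)})$ and $\lambda_n=O(n^{p/(2p+1)})$ shows that $O_P(n^{-1})$, $o_P(K_n^{-(p+1)})$, $o_P(\lambda_nK_nn^{-1})$ and the fluctuation of $\Delta_n$ are all $o_P(\sqrt{V[\hat{f}(x,\gamma)|\vec{X}_n]})$. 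Hence the statistic in the theorem equals $S_n/\sqrt{V[\hat{f}(x,\gamma)|\vec{X}_n]}+o_P(1)$, and since $V[S_n|\vec{X}_n]$ and $V[\hat{f}(x,\gamma)|\vec{X}_n]$ both agree with the leading variance term of Theorem \ref{est} up to $o_P(K_n/n)$, it suffices to prove $S_n/\sqrt{V[S_n|\vec{X}_n]}\xrightarrow{D}N(0,1)$.

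Writing $S_n=\sum_{i=1}^n w_{ni}\varepsilon_i$ with weights $w_{ni}=f(x|\vec{\beta}_0)^\gamma f(x_i|\vec{\beta}_0)^{-\gamma}[\vec{B}(x)^\prime(Z^\prime Z+\lambda_nQ_m)^{-1}Z^\prime]_i$, the summands are, conditionally on $\vec{X}_n$, independent with mean zero and finite $(2+\delta)$th absolute moment by hypothesis, so what remains is the conditional Lyapunov condition
\begin{eqnarray*}
\frac{\sum_{i=1}^n E[|w_{ni}\varepsilon_i|^{2+\delta}\mid\vec{X}_n]}{\bigl(V[S_n|\vec{X}_n]\bigr)^{(2+\delta)/2}}
\ \le\ C\,\frac{\bigl(\max_{i}|w_{ni}|\bigr)^{\delta}\sum_{i}w_{ni}^2}{\bigl(\sum_{i}w_{ni}^2\sigma^2(x_i)\bigr)^{(2+\delta)/2}}\ \xrightarrow{P}\ 0 .
\end{eqnarray*}
Here I would use the $B$-spline facts already exploited in Proposition \ref{fix} — the compact support of each basis function together with the banded structure and spectral bounds of $Z^\prime Z+\lambda_nQ_m$ — to obtain $\max_i|w_{ni}|=O_P(K_n/n)$ and $\sum_i w_{ni}^2=O_P(K_n/n)$, while $\sum_i w_{ni}^2\sigma^2(x_i)$ is bounded below by a constant multiple of $K_n/n$ with probability tending to one, being the (strictly positive, under Assumptions 1 and 2) leading variance term of Theorem \ref{est}. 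Substituting these estimates yields a bound of order $(K_n/n)^{\delta/2}=o(1)$, so Lyapunov's theorem gives $S_n/\sqrt{V[S_n|\vec{X}_n]}\xrightarrow{D}N(0,1)$, and Slutsky's theorem then absorbs the $o_P(1)$ remainders to finish.

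The main obstacle is the uniform control of the spline weights $w_{ni}$ and, tied to it, making the Lyapunov ratio vanish in probability over the random design, since the randomness of $\vec{X}_n$, the penalty $\lambda_nQ_m$ inside the inverse, and the substitution of $\vec{\beta}_0$ for $\hat{\vec{\beta}}$ all have to be handled simultaneously. Once $\max_i|w_{ni}|=O_P(K_n/n)$ and $\sum_i w_{ni}^2\sigma^2(x_i)$ of order $K_n/n$ are in place, the remainder is a routine application of Lyapunov's and Slutsky's theorems with the prescribed rates for $K_n$ and $\lambda_n$.
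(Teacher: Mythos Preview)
Your proposal is correct and matches the paper's proof: both reduce to the centered linear form $S_n=\sum_i w_{ni}\varepsilon_i$ evaluated at $\vec{\beta}_0$, dispose of the $\hat{\vec{\beta}}$-versus-$\vec{\beta}_0$ and bias remainders as $o_P(\sqrt{K_n/n})$, and verify Lyapunov's condition via $w_{ni}=O_P(K_n/n)$ together with $V[S_n|\vec{X}_n]$ of exact order $K_n/n$. One minor difference in bookkeeping is worth noting: the paper bounds the Lyapunov numerator crudely by $n(K_n/n)^{2+\delta}$, and it is precisely this step that uses the hypothesis $\delta\ge 2$ in combination with $K_n=o(n^{1/2})$, whereas your factorization $(\max_i|w_{ni}|)^\delta\sum_i w_{ni}^2$ yields the sharper ratio $(K_n/n)^{\delta/2}$, which would in fact vanish for any $\delta>0$.
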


If $\lambda_n=0$, we obtain the semiparametric regression spline estimator from (\ref{semiest}). 
Thus, it is clear that the asymptotic result of the semiparametric regression spline is contained in Theorems \ref{est} and \ref{norm}. 
These are obtained from one parametric model. 
If we choose a polynomial model as $f(x|\vec{\beta})$, we obtain the following Corollary. 

\begin{corollary}\label{pol}
Let $f_q(x|\vec{\beta}_q) (q\leq p)$ be the $q$th polynomial model. 
Then, under $\lambda_n=0$ and $\gamma=0$, or $\lambda_n>0$ and $\gamma=0$,  using $p=1$, $Q_2$ and equidistant knots, the SPSE is the same as the NPSE. 
\end{corollary}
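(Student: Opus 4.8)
\medskip\noindent\textbf{Proof plan.} The plan is to isolate the only channel through which the parametric part enters $\hat{f}(x,0)$ and to show it is exactly reproduced by the penalized-spline smoother. Write $\vec{f}(\hat{\vec{\beta}})=(f(x_1|\hat{\vec{\beta}}),\dots,f(x_n|\hat{\vec{\beta}}))^\prime$, so that with $\gamma=0$ the vector $\vec{R}_0$ of (\ref{semiPsp}) equals $\vec{y}-\vec{f}(\hat{\vec{\beta}})$. Combining (\ref{semiest}) and (\ref{semiPsp}),
\begin{eqnarray*}
\hat{f}(x,0)=f(x|\hat{\vec{\beta}})+\vec{B}(x)^\prime(Z^\prime Z+\lambda_n Q_m)^{-1}Z^\prime\bigl(\vec{y}-\vec{f}(\hat{\vec{\beta}})\bigr),
\end{eqnarray*}
whereas the NPSE applies the same penalized-spline fit directly to $\{(x_i,y_i)\}$, i.e. $\hat{f}_{NP}(x)=\vec{B}(x)^\prime(Z^\prime Z+\lambda_n Q_m)^{-1}Z^\prime\vec{y}$. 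Subtracting, the conclusion $\hat{f}(x,0)=\hat{f}_{NP}(x)$ for all $x$ is equivalent to the purely algebraic identity
\begin{eqnarray*}
\vec{B}(x)^\prime(Z^\prime Z+\lambda_n Q_m)^{-1}Z^\prime\vec{f}(\hat{\vec{\beta}})=f(x|\hat{\vec{\beta}}),\qquad x\in[0,1],
\end{eqnarray*}
i.e. the fitted polynomial must be a fixed point of the smoothing operator. Note this is an exact identity, independent of the sample size, of the error structure, and of which functional $T$ defines $\hat{\vec{\beta}}$; only the polynomial degree $q\le p$ enters.

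To prove the identity I would first produce $\vec{c}\in\mathbb{R}^{K_n+p}$ with $\vec{B}(x)^\prime\vec{c}=f(x|\hat{\vec{\beta}})$ for all $x\in[0,1]$: since $f(\cdot|\hat{\vec{\beta}})$ is a polynomial of degree $q\le p$, its exact representability in the degree-$p$ $B$-spline basis is the polynomial-reproduction (Marsden) property of $B$-splines (de Boor (2001)), and this fixes $\vec{c}$ once the knots are chosen; evaluating at the design points gives $Z\vec{c}=\vec{f}(\hat{\vec{\beta}})$. The remaining step is to verify that $\vec{c}$ solves the penalized normal equations $(Z^\prime Z+\lambda_n Q_m)\vec{c}=Z^\prime\vec{f}(\hat{\vec{\beta}})$ --- equivalently, that $\vec{c}$ is the unique minimizer of $(\vec{f}(\hat{\vec{\beta}})-Z\vec{b})^\prime(\vec{f}(\hat{\vec{\beta}})-Z\vec{b})+\lambda_n\vec{b}^\prime Q_m\vec{b}$. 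The residual part vanishes at $\vec{b}=\vec{c}$ because $Z\vec{c}=\vec{f}(\hat{\vec{\beta}})$; if $\lambda_n=0$ there is nothing further to check and, $Z^\prime Z$ being invertible under the Assumptions, the identity follows. If $\lambda_n>0$ with $p=1$, $Q_m=Q_2$ and equidistant knots, I would invoke the explicit form of the linear $B$-spline coefficients of a polynomial of degree $q\le1$: on an equispaced knot grid these coefficients are an affine (arithmetic) function of the knot index, so their second differences vanish, giving $Q_2\vec{c}=\vec{0}$. Hence $(Z^\prime Z+\lambda_n Q_2)\vec{c}=Z^\prime Z\vec{c}+\lambda_n Q_2\vec{c}=Z^\prime\vec{f}(\hat{\vec{\beta}})$, and by invertibility of $Z^\prime Z+\lambda_n Q_2$ we get $(Z^\prime Z+\lambda_n Q_2)^{-1}Z^\prime\vec{f}(\hat{\vec{\beta}})=\vec{c}$, so $\vec{B}(x)^\prime(Z^\prime Z+\lambda_n Q_2)^{-1}Z^\prime\vec{f}(\hat{\vec{\beta}})=\vec{B}(x)^\prime\vec{c}=f(x|\hat{\vec{\beta}})$. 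This establishes the identity and hence $\hat{f}(x,0)=\hat{f}_{NP}(x)$.

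The step I expect to be the main obstacle is $Q_2\vec{c}=\vec{0}$: it requires the explicit $B$-spline coefficients of a low-degree polynomial, and the clean affine-in-index structure holds precisely for linear $B$-splines on equispaced knots --- for higher degree $p$ the coefficients are a polynomial (not affine) in the index, and an $m$th-order difference penalty no longer annihilates them unless the polynomial degree is sufficiently small relative to $m$ and the knot spacing is uniform. This is exactly why the corollary restricts the penalized case to $p=1$, $Q_2$ and equidistant knots. The other ingredients --- the reduction in the first paragraph and the invertibility of $Z^\prime Z+\lambda_n Q_m$ --- are routine under the Assumptions.
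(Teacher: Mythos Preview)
Your proposal is correct and follows essentially the same route as the paper: represent the fitted polynomial exactly in the $B$-spline basis via Marsden's identity to obtain $\vec{c}$ with $Z\vec{c}=\vec{f}(\hat{\vec{\beta}})$, dispose of the $\lambda_n=0$ case immediately, and for $\lambda_n>0$ reduce to showing $Q_2\vec{c}=\vec{0}$. The only cosmetic difference is that the paper verifies $D_2\vec{c}=\vec{0}$ by direct computation with the explicit coefficients $\phi^{(j)}_{k,1}(0)$, whereas you argue more conceptually that on an equispaced grid the linear-$B$-spline coefficients of a degree-$\le 1$ polynomial are affine in the knot index; both arguments yield the same conclusion.
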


\noindent{\bf Remark 1}
\quad From Theorem \ref{norm}, as the advanced analysis, we can construct the asymptotic pointwise confidence interval of $f(x)$ by estimating the variance of the error. 
\\ 

\noindent{\bf Remark 2}
\quad Theorems \ref{est} and \ref{norm} can be applied for $\gamma\in\{0,1\}$. 
When $\gamma=0$, the results become those for additive correction. 
When $\gamma=1$, $b_a(x|\vec{\beta}_0,1)$ and the variance agrees with that of the estimator for multiplicative correction. 
In $b_\lambda(x|\vec{\beta}_0,1)$, it is understood that $\vec{b}^*(\vec{\beta}_0,1)$ is a best $L_\infty$ approximation of $f(x)/f(x|\vec{\beta}_0)-1$. 
Therefore, $\vec{b}^*(\vec{\beta}_0,1)$ can be written as $\vec{b}^*(\vec{\beta}_0,1)=\vec{b}^{*}-\vec{1}$, where $\vec{b}^{*}$ is a best $L_\infty$ approximation of $f(x)/f(x|\vec{\beta}_0)$ and $\vec{1}$ is a $(K_n+p)$ vector with all components equal to 1. 
In conclusion, $b_\lambda(x|\vec{\beta}_0,1)$ can be written as
$$
b_{\lambda }(x|\vec{\beta}_0,1)=-\frac{\lambda_n}{n}f(x|\vec{\beta}_0)^\gamma\vec{B}(x)^\prime G(q)^{-1}Q_m \vec{b}^*
$$
because all components of $Q_m\vec{1}$ have vanished.   
\\

\noindent{\bf Remark 3}
\quad When $f(x)=f(x|\vec{\beta}_0)$ is assumed, we obtain $b_a(x|\vec{\beta}_0,\gamma)=0$ and $b_\lambda(x|\vec{\beta}_0,\gamma)=0$ by choosing $\vec{b}^*(\gamma,\vec{\beta}_0)=\vec{0}$ as a best $L_\infty$ approximation of 0. 
For $\gamma=1$, in particular, $b_a(x|\vec{\beta}_0,1)=0$ and $b_\lambda(x|\vec{\beta}_0,1)=0$ both hold even in cases where $f(x)=cf(x|\vec{\beta}_0)$ with any constant $c\not=0$.
\\

\noindent{\bf Remark 4}
\quad If we use the local $p$th polynomial technique in the second step estimation, we obtain the asymptotic bias $b_\ell(x|\vec{\beta}_0)$ as 
{\small
\begin{eqnarray*}
b_\ell(x|\vec{\beta}_0,\gamma)
=
\left\{
\begin{array}{cl}
-h_n^{p+1}\displaystyle\frac{f(x|\vec{\beta}_0)r_\gamma^{(p+1)}(x|\vec{\beta}_0)}{(p+1)!} 
\int_{\mathbb{R}} z^{p+1}H_{p}(z)dz,& p\ :\ {\rm odd},\\
-h_n^{p+2}f(x|\vec{\beta}_0)\left\{\displaystyle\frac{r_\gamma^{(p+2)}(x|\vec{\beta}_0)}{(p+2)!} +\displaystyle\frac{r_\gamma^{(p+1)}(x|\vec{\beta}_0)q^\prime(x)}{(p+1)!q(x)}\right\}\displaystyle\int_{\mathbb{R}} z^{p+2}H_{p}(z)dz,& p\ :\ {\rm even},
\end{array}
\right.
\end{eqnarray*}
}
where $h_n$ is bandwidth and $H_{p}(z)$ is the $p$th order kernel function. 
If $K_n^{-1}$ and $h_n$ are equal and $p$ is odd, the difference between $b_a(x|\vec{\beta}_0)$ and $b_\ell(x|\vec{\beta}_0)$ is only 
\begin{eqnarray}
\sum_{j=1}^{K_n}I(\kappa_{j-1}\leq x<\kappa_j)B_{p+1}\left(\frac{x-\kappa_{j-1}}{K_n^{-1}}\right)\ \ {\rm and}\ \  \int_{\mathbb{R}} z^{p+1}H_{p}(z)dz. \label{const}
\end{eqnarray} 
If we can calculate (\ref{const}), we would be able to compare the bias of the SPSE with that of the semiparametric local polynomial kernel estimator.
As an example, when $p=1$, it is easy to show that $B_2(x)=x^2-x+1/6<1/5$ for $x\in[0,1]$, while we have $\int_{\mathbb{R}} z^2H_G(z)dz=1$ for the Gaussian kernel $H_{G}(z)$ and $\int_{\mathbb{R}} z^2H_E(z)dz=1/5$ for the Epanechnikov kernel $H_E(z)$. 
Therefore $b_a(x|\vec{\beta}_0)$ is smaller than $b_\ell(x|\vec{\beta}_0)$ in this situation, which reveals that the SPSE is superior than the SLLE.


\section{Parametric model selection}

In this section, we describe how to choose a parametric model.
From Remark 3, if the true regression function satisfies $f\in \{f(\cdot|\vec{\beta})|\vec{\beta}\in B\subseteq \mathbb{R}^M\}$, the bias of the SPSE is reduced. 
Hence we determine the initial parametric model in a bias reduction context. 
Specifically, our purpose is to choose a parametric model such that the asymptotic bias of the SPSE becomes smaller than that of the NPSE: 
\begin{eqnarray}
|b_a(x|\vec{\beta}_0,\gamma)|<|b_a(x)|\ \ \ {\rm and}\ \ \ |b_{\lambda}(x|\vec{\beta}_0,\gamma)|<|b_{\lambda}(x)|,\  {\rm for \ all} \ x\in(0,1), \label{cond1}
\end{eqnarray}
where $b_a(x)$ and $b_{\lambda}(x)$ are the asymptotic biases of the NPSE. 
If $f(x|\vec{\beta})$ is constant, $b_a(x|\vec{\beta}_0,\gamma)$ and $b_\lambda(x|\vec{\beta}_0,\gamma)$ are equivalent to $b_a(x)$ and $b_{\lambda}(x)$, respectively.
When the same $K_n$ and $\lambda_n$ are used in both the SPSE and the NPSE, 
(\ref{cond1}) can be rewritten as $L_a(x,\gamma)>0$ and $L_{\lambda}(x,\gamma)>0$ for all $x\in(0,1)$, where
\begin{eqnarray*}
L_a(x,\gamma)=|f^{(p+1)}(x)|-\left|f(x|\vec{\beta}_0)^\gamma\left(\frac{f(x)-f(x|\vec{\beta}_0)}{f(x|\vec{\beta}_0)^\gamma}\right)^{(p+1)}\right|
\end{eqnarray*}
and 
\begin{eqnarray*}
L_\lambda(x,\gamma)=|\vec{B}(x)^\prime G(q)^{-1}Q_m\vec{b}_f^*|-|f(x|\vec{\beta}_0)^\gamma\vec{B}(x)^\prime G(q)^{-1}Q_m\vec{b}^*(\vec{\beta}_0,\gamma)|,
\end{eqnarray*}
where $\vec{b}_f^*$ is a best $L_\infty$ approximation to $f(x)$.
As a pilot estimator of $f$ and its $(p+1)$th derivative, we can use the local polynomial estimator $\hat{f}$ with degree $p+2$. 
Then the estimator of $L_a(x,\gamma)$ and $L_{\lambda}(x,\gamma)$ can be obtained as 
\begin{eqnarray*}
\hat{L}_a(x,\gamma)=|\hat{f}^{(p+1)}(x)|-\left|f(x|\hat{\vec{\beta}})^\gamma\left(\frac{\hat{f}(x)-f(x|\hat{\vec{\beta}})}{f(x|\hat{\vec{\beta}})^\gamma}\right)^{(p+1)}\right|
\end{eqnarray*} 
and by using empirical form, 
\begin{eqnarray*}
\hat{L}_\lambda(x,\gamma)=|\vec{B}(x)^\prime\Lambda^{-1}Q_m(Z^\prime Z)^{-1}Z^\prime \hat{\vec{f}}|
-|f(x|\hat{\vec{\beta}})^\gamma\vec{B}(x)^\prime\Lambda^{-1}Q_m(Z^\prime Z)^{-1}Z^\prime \hat{\vec{r}}_\gamma|,
\end{eqnarray*}
where $\hat{\vec{f}}=(\hat{f}(x_1)\ \cdots\ \hat{f}(x_n))^\prime$ and $\hat{\vec{r}}_\gamma$ is an $n$-vector with $i$th component $\{\hat{f}(x_i)-f(x_i|\hat{\vec{\beta}})\}/f(x_i|\hat{\vec{\beta}})^\gamma$.
Here, we use the fact that
\begin{eqnarray*}
\lambda_nf(x|\hat{\vec{\beta}})^\gamma\vec{B}(x)^\prime\Lambda^{-1}Q_m(Z^\prime Z)^{-1}Z^\prime \hat{\vec{r}}_\gamma =b_{\lambda}(x|\vec{\beta}_0,\gamma)+o_P(\lambda_nK_nn^{-1}),
\end{eqnarray*}
which is detailed in the proof of Theorem 2 (a) of Claeskens et al. (2009). 
We choose one parametric model by relative evaluation. 
Let 
\begin{eqnarray*}
C_{a\cap \lambda}(f(\cdot|\vec{\beta}))=\#\left\{z_j\in(0,1)\Bigl| \hat{L}_a(z_j,\gamma)>0, \hat{L}_{\lambda}(z_j,\gamma)>0, j=1,\cdots,J\right\},
\end{eqnarray*}
for a given parametric model $f(\cdot|\vec{\beta})$ and some finite grid points $\{z_j\}_{1}^{J}$ on $(0,1)$. 
Here, for a set $A$, $\# A$ is the cardinality of $A$.
After preparing a class of candidate parametric models $\{f_k=f_k(\cdot|\vec{\beta}_k);k=1,\cdots,K\}$, 
we choose a parametric model satisfying
\begin{eqnarray}
f(x|\vec{\beta})=\underset{f_k}{\argmax} \left\{C_{a\cap \lambda}(f(\cdot|\vec{\beta}_k))\right\}. \label{modelsel}
\end{eqnarray}
In summary, for each parametric model $f_k$, we calculate $\hat{L}_a$, $\hat{L}_{\lambda}$ and $C_{a\cap \lambda}(f(\cdot|\vec{\beta}_k))$. 
By using the parametric model which satisfies (\ref{modelsel}), we construct the SPSE. 
If we can choose a good parametric model and a good $\hat{\vec{\beta}}$, the SPSE will have better behavior than the NPSE. 
\\

\noindent{\bf Remark 5} 
\quad When we construct the semiparametric regression spline estimator (SPSE with $\lambda_{n}=0$), we obtain $b_\lambda(x|\vec{\beta}_0,\gamma)\equiv 0$. 
Therefore, $C_{a\cap\lambda}$ depends only on $L_a(x,\gamma)$. 
\\

\noindent{\bf Remark 6}
\quad We see that the bias term $b_a(x|\vec{\beta}_0,\gamma)$ appears due to the use of the $B$-spline model. 
On the other hand, $b_{\lambda}(x|\vec{\beta}_0,\gamma)$ arises from the penalty component. 
If we use the regression spline, $b_{\lambda}(x|\vec{\beta}_0,\gamma)$ vanishes and the bias of the estimator becomes less than that of the penalized spline estimator. 
However, the regression spline often provides overfitting. 
Thus, we use the penalized method for obtaining a smooth curve.  
If $\lambda_n>0$, a certain amount of smoothness in the estimator is assured. 
However, $b_{\lambda}(x|\vec{\beta}_0, \gamma)$ may grow too large because of the influence of the parametric model. 
Therefore under $\lambda_n>0$, we suggest choosing $f(x|\vec{\beta})$ such that $b_{\lambda}(x|\vec{\beta}_0,\gamma)$ becomes less than $b_\lambda(x)$. 
Hence, together with $L_a(x,\gamma)$, the parametric model chosen by $C_{a\cap\lambda}$ appears to bring fitness and smoothness to the SPSE.

\section{Simulation}

In this section, we examine the results of a numerical study to confirm the effects of the SPSE on a finite sample. 
We choose a parametric model by the criteria discussed in Section 4. 
We also compare the performance of the SPSE to those of the NPSE, the SLLE and the fully nonparametric local linear estimator (NLLE). 
In all situations, we utilize the linear and cubic splines and the second difference penalty for the second step nonparametric estimation. 
The SPSEs with linear and cubic splines are designated as SPSE1 and SPSE3, respectively. 
NPSE1 and NPSE3 are labeled similarly. 
The number of knots and the smoothing parameter are determined by GCV. 
The design points $\{x_i\}_1^n$ are drawn from a uniform density on $[0,1]$ and the errors $\{\varepsilon_i\}_1^n$ are generated from the normal with mean 0 and variance $\sigma^2(x_i)$. 
Let 
\begin{eqnarray*}
C_a&=&C_a(f(\cdot|\vec{\beta}))=\#\left\{z_j\in(0,1)\Bigl| \hat{L}_a(z_j,\gamma)>0, j=1\cdots,J\right\},\\
C_\lambda&=&C_\lambda(f(\cdot|\vec{\beta}))=\#\left\{z_j\in(0,1)\Bigl| \hat{L}_{\lambda}(z_j,\gamma)>0, j=1,\cdots,J\right\},\\
C_{a\cap \lambda}&=&C_{a\cap \lambda}(f(\cdot|\vec{\beta}))=\#\left\{z_j\in(0,1)\Bigl| \hat{L}_a(z_j,\gamma)>0, \hat{L}_{\lambda}(z_j,\gamma)>0, j=1,\cdots,J\right\},
\end{eqnarray*}
where $z_j=j/J, J=100$.
We prepare a class of candidate parametric models $\{f_k=f_k(\cdot|\vec{\beta}_k)|k=1,\cdots,K\}$.
For each $f_k$, we calculate $C_a$, $C_\lambda$ and $C_{a\cap\lambda}$. 
We use a number of repetitions $R=1000$. 
For each iteration, we pick up $f_k$ from candidate models which maximize $C_a$. 
The same manipulation is implemented for $C_\lambda$ and $C_{a\cap\lambda}$. 
Finally we count the number of times that $f_k$ is picked up during the iterations. 
For comparison, we also show the model selection by using the AIC and the Takeuchi information criterion (TIC) detailed in Konishi and Kitagawa (2008).  

Let 
\begin{eqnarray*}
B_j=\frac{1}{R}\sum_{r=1}^{R}\hat{f}_r(z_j)-f(z_j),\ \ \ 
{\rm V}_j=\frac{1}{R}\sum_{r=1}^{R}\left\{\hat{f}_r(z_j)-\frac{1}{R}\sum_{r=1}^{R}\hat{f}_r(z_j)\right\}^2,
\end{eqnarray*}
where $\hat{f}_r(z_j)$ is the estimator for the $r$th repetition. 
Let 
${\rm ISB}=100^{-1}\sum_{j=1}^{100}B_j^2$, 
${\rm V}=100^{-1}\sum_{j=1}^{100}{\rm V}_j$ and ${\rm MISE}={\rm ISB}+{\rm V}$  be the estimates of integrated squared bias, integrated variance and mean integrated squared error of $\hat{f}$, respectively. 
For comparison, the ISB, V and MISE of the SLLE and the NLLE were also calculated. 
In the SLLE and the NLLE, we used the Gaussian kernel and its bandwidth $h_n$ was obtained by the direct plug-in approach (Ruppert et al. (1995)). 
\\

\noindent{\bf Example 1}
\quad The true function is $f(x)=2+\sin(2\pi x)$. 
We use three different specified parametric models: 
\begin{eqnarray*}
f(x|\vec{\beta})=
\left\{
\begin{array}{ll}
\beta_0+\beta_1\sin(2\pi x),&f_1={\it sin},\\
\beta_0+\beta_1x, & f_2={\it poly1},\\
\beta_0+\beta_1x+\beta_2^2+\beta_3x^3,&f_3={\it poly3}.
\end{array}
\right.
\end{eqnarray*}
The true curve can be approximated by {\it sin}. 
The curve {\it poly1} is a rough model and {\it poly3} is close to the true $f$. 
The variance of the error is $\sigma^2(x)=(0.5)^2$ and the sample size is $n=25$. 
The coefficients of the covariate are estimated by the maximum likelihood method for each model. 
This set-up is similar to that used by Glad (1998).

\begin{table}
\begin{center}
\caption{The results of parametric model selection in Example 1.}
\begin{tabular}{c|c|ccc|ccc|cc}
\hline
\multicolumn{2}{c}{$n=25$}&\multicolumn{3}{|c|}{SPSE1}&\multicolumn{3}{c|}{SPSE3}&\multicolumn{2}{c}{IC}\\
\cline{3-8}
\hline
method&model&$C_a$&$C_\lambda$&$C_{a\cap\lambda}$&$C_a$&$C_\lambda$&$C_{a\cap\lambda}$&AIC&TIC\\
\hline
&{\it sin}& 
1000&901&1000&
1000&1000&1000&
850&938
\\
$\gamma=0$&{\it poly1}& 
0&0&0&
0&0&0&
0&0
\\
&{\it poly3}& 
0&99&0&
0&0&0&
150&62
\\
\hline
&{\it sin}& 
997&917&974&
997&837&953&
850&938
\\
$\gamma=1$&{\it poly1}& 
0&34&3&
0&77&4&
0&0
\\
&{\it poly3}& 
1&33&20&
3&86&43&
150&62
\\
\hline
\end{tabular}
\end{center}
\end{table}

Table 1 includes the number of times that each parametric model $f_k$ was chosen based on each criterion. 
In $C_a$, $C_\lambda$ and $C_{a\cap \lambda}$, {\it sin} was selected in almost all iterations. 
This result is desirable because {\it sin} coincides with the true function $f$. 
We also observe that the AIC and the TIC often choose {\it sin}. 
When the number of times {\it sin} is chosen is taken into consideration, it seems that $C_{a\cap \lambda}$ is a better selector than the AIC and the TIC. 

Results for ISB, V and MISE of the SPSE and the NPSE are given in Table 2. 
The SPSE with {\it sin} succeeds in regards to bias reduction even with a small sample size, and variance and MISE of the SPSE are also smaller than those of the NPSE. 
In additive correction, the result of SPSE1 with {\it poly1} is exactly the same as that of the NPSE (see Corollary 1). 
If we use {\it poly3}, MISE of the SPSE is smaller than that of the NPSE, although the squared bias is somewhat larger in multiplicative correction. 
In both ${\rm ISB}$, V and MISE, the values of the SPSE are smaller than those of the SLLE.
We implemented the same method of analysis for the case $n=200$. 
The ${\rm ISB}$, V and MISE of the SPSE and those of the NPSE were almost the same, although these are not shown in this paper. 
\\

\begin{table}
\begin{center}
\caption{Results of integrated squared bias, variance and mean integrated squared bias of Example 1. All entries for ISB,V and MISE are $10^3$ times their actual values.}
\scalebox{0.9}[0.9]{
\begin{tabular}{c|c|ccc|ccc|ccc}
\hline
\multicolumn{2}{c|}{$n=25$}&\multicolumn{3}{c|}{SPSE1}&\multicolumn{3}{c}{SPSE3}&\multicolumn{3}{|c}{SLLE}\\
\cline{3-8}
\hline
method&model&${\rm ISB}$&V&MISE&${\rm ISB}$&V&MISE&${\rm ISB}$&V&MISE\\
\hline
&{\it sin}& 
0.009&8.308&8.318&
0.009&7.907&7.917&
0.029&9.032&9.061
\\
$\gamma=0$&{\it poly1}& 
1.450&12.111&13.562&
1.110&10.056&11.166&
2.370&14.105&16.476
\\
&{\it poly3}& 
1.250&10.949&12.199&
0.873&9.636&10.510&
2.071&15.825&17.898
\\
\hline
&{\it sin}& 
0.011&8.394&8.405&
0.010&8.292&8.302&
0.026&10.708&10.734
\\
$\gamma=1$&{\it poly1}& 
1.571&12.322&13.893&
1.565&12.212&13.777&
2.357&13.860& 16.217
\\
&{\it poly3}& 
2.016&11.198&13.215&
1.016&10.198&11.215&
2.942&12.472&15.415
\\
\hline
\hline
\multicolumn{2}{c}{$n=25$}&\multicolumn{3}{|c|}{NPSE1}&\multicolumn{3}{|c}{NPSE3}&\multicolumn{3}{|c}{NLLE}\\
\hline
\multicolumn{2}{c|}{Fully nonparametric}&ISB&V&MISE&ISB&V&MISE&ISB&V&MISE\\
\cline{3-11}
\multicolumn{2}{c|}{method}
&
1.450&12.111&13.562&
1.108&11.030&12.138&
2.370&14.105&16.476
\\
\hline
\end{tabular}
}
\end{center}
\end{table}

\noindent{\bf Example 2}
\quad The same true function $f$ used in Example 1 is adopted and the sample size is $n=25$.
A class of initial parametric models is chosen, consisting of $q$th degree polynomials ranging from $q=1$ to $6$ and designated as  {\it poly1}, ..., {\it poly6}, respectively, and $\sigma^2=1$. 
This parametric model clearly does not contain the true $f$ and the estimator becomes unstable because the variance of error is relatively large. 

\begin{table}
\begin{center}
\caption{The results of parametric model selection in Example 2.}
\begin{tabular}{c|c|ccc|ccc|cc}
\hline
\multicolumn{2}{c}{$n=25$}&\multicolumn{3}{|c|}{SPSE1}&\multicolumn{3}{c|}{SPSE3}&\multicolumn{2}{c}{IC}\\
\cline{3-8}
\hline
method&model&$C_a$&$C_\lambda$&$C_{a\cap\lambda}$&$C_a$&$C_\lambda$&$C_{a\cap\lambda}$&AIC&TIC\\
\hline
&{\it poly1}& 
0&0&0&
0&0&0&
0&0
\\
&{\it poly2}& 
0&49&30&
0&8&0&
0&0
\\
$\gamma=0$&{\it poly3}& 
956&472&511&
0&939&0&
415&693
\\
&{\it poly4}& 
6&43&6&
5&2&15&
116&8
\\
&{\it poly5}& 
6&356&312&
967&37&982&
306&298
\\
&{\it poly6}& 
0&3&85&
20&1&3&
163&1
\\
\hline
&{\it poly1}& 
2&43&37&
2&35&49&
0&0
\\
&{\it poly2}& 
13&4&6&
173&44&46&
0&0
\\
$\gamma=1$&{\it poly3}& 
755&376&410&
756&606&514&
415&693
\\
&{\it poly4}& 
0&15&71&
0&0&1&
116&8
\\
&{\it poly5}& 
169&366&246&
10&166&213&
306&298
\\
&{\it poly6}& 
3&119&135&
1&35&49&
163&1
\\
\hline
\end{tabular}
\end{center}
\end{table}

In Table 3, we tabulate the number of times out of a 1000 repetitions that each polynomial model is selected based on bias reduction and information criteria. 
In multiplicative correction, {\it poly3} was selected by $C_a$, $C_\lambda$ and $C_{a\cap \lambda}$ most often. 
In additive correction of SPSE1, {\it poly3} was selected by $C_a$ most often. 
On the other hand, in SPSE3, $C_a$ and $C_{a\cap\lambda}$ selected {\it poly5}. 
Finally, AIC and TIC most often selected {\it poly3} and {\it poly5}. 
It appears that our criteria and the information criteria tend to choose the same model.

The ${\rm ISB}$, V and MISE of the estimators are shown in Table 4. 
In additive correction, {\it poly5} has the smallest ${\rm ISB}$. 
We note that $C_{a\cap\lambda}$ chooses {\it poly5} in SPSE3.
In both corrections, {\it poly3} has the smallest V and MISE in all models. 
On the whole, the SPSE displays better behavior than the SLLE although there are some exceptions. 
\\

\begin{table}
\begin{center}
\caption{Results of integrated squared bias, variance and mean integrated squared error for Example 2. All entries for ${\rm ISB}$,V and MISE are $10^3$ times their actual values.}
\scalebox{0.88}[0.9]{
\begin{tabular}{c|c|ccc|ccc|ccc}
\hline
\multicolumn{2}{c}{$n=25$}&\multicolumn{3}{|c|}{SPSE1}&\multicolumn{3}{c}{SPSE3}&\multicolumn{3}{|c}{SLLE}\\
\cline{3-8}
\hline
method&model&${\rm ISB}$&V&MISE&${\rm ISB}$&V&MISE&${\rm ISB}$&V&MISE\\
\hline
&{\it poly1}& 
1.213&232.429&233.643&
1.417&256.275&257.692&
1.991&246.245&248.236
\\
&{\it poly2}& 
0.846&226.256&227.103&
0.695&239.949&240.645&
 2.836&236.124& 238.960
\\
$\gamma=0$&{\it poly3}& 
0.776&225.508&226.285&
0.729&210.204&210.933&
1.157&243.466&244.623
\\
&{\it poly4}& 
1.322&251.572&252.894&
1.476&236.314&237.791&
 2.626&229.014&231.640
\\
&{\it poly5}& 
0.161&251.777&251.938&
0.122&238.596&238.717&
0.128&277.704&277.832
\\
&{\it poly6}& 
0.162&236.066&236.227&
0.134&233.793&233.927&
0.119&235.824&235.943
\\
\hline
&{\it poly1}& 
1.665&230.226&231.891&
1.746&253.074&254.820&
2.109&254.547&256.657
\\
&{\it poly2}& 
0.534&268.503&269.037&
0.321&225.818&226.138&
2.871&256.551& 259.421
\\
$\gamma=1$&{\it poly3}& 
0.323&213.758&214.081&
0.519&214.566&215.086&
1.545&237.094&238.638
\\
&{\it poly4}& 
0.924&233.528&234.452&
0.735&245.211&245.956&
2.858& 259.805&262.662
\\
&{\it poly5}& 
0.390&218.850&219.240&
0.624&221.162&221.786&
0.733& 243.170&243.903
\\
&{\it poly6}& 
0.356&241.451&241.807&
0.678&241.242&241.920&
0.895&240.767&241.662
\\
\hline
\hline
\multicolumn{2}{c}{$n=25$}&\multicolumn{3}{|c|}{NPSE1}&\multicolumn{3}{|c}{NPSE3}&\multicolumn{3}{|c}{NLLE}\\
\hline
\multicolumn{2}{c|}{Fully nonparametric}&ISB&V&MISE&ISB&V&MISE&ISB&V&MISE\\
\cline{3-11}
\multicolumn{2}{c|}{method}
&
1.213&232.429&233.643&
1.629&249.219&250.848&
1.991&246.245&248.236
\\
\hline
\end{tabular}
}
\end{center}
\end{table}

\noindent{\bf Example 3}
\quad The set-up of the true function and parametric models are the same as in Example 2, but the sample size is set to $n=75$. 
We utilize the error variance defined as $\sigma^2(x)=(x-0.5)^2+0.1$. 
However the parametric estimator is composed by the ordinary least squares method.

\begin{table}
\begin{center}
\caption{The results of parametric model selection in Example 3.}
\begin{tabular}{c|c|ccc|ccc|cc}
\hline
\multicolumn{2}{c|}{$n=75$}&\multicolumn{3}{c|}{SPSE1}&\multicolumn{3}{c|}{SPSE3}&\multicolumn{2}{c}{IC}\\
\cline{3-8}
\hline
method&model&$C_a$&$C_\lambda$&$C_{a\cap\lambda}$&$C_a$&$C_\lambda$&$C_{a\cap\lambda}$&AIC&TIC\\
\hline
&{\it poly1}& 
0&0&0&
0&0&0&
0&0
\\
&{\it poly2}& 
0&5&0&
0&65&0&
0&0
\\
$\gamma=0$&{\it poly3}& 
1000&47&8&
0&142&0&
457&0
\\
&{\it poly4}& 
0&2&172&
0&12&21&
94&2
\\
&{\it poly5}& 
0&604&630&
945&624&872&
296&950
\\
&{\it poly6}& 
0&277&113&
17&66&68&
153&48
\\
\hline
&{\it poly1}& 
8&2&8&
8&51&62&
0&0
\\
&{\it poly2}& 
64&222&168&
62&150&118&
0&0
\\
$\gamma=1$&{\it poly3}& 
894&17&86&
890&101&104&
457&0
\\
&{\it poly4}& 
0&72&104&
0&20&31&
94&2
\\
&{\it poly5}& 
0&363&398&
5&295&333&
296&950
\\
&{\it poly6}& 
0&182&85&
3&253&214&
153&48
\\
\hline
\end{tabular}
\end{center}
\end{table}

In Table 5, the results of the parametric model selection are shown. 
In additive correction of SPSE1, $C_{a\cap\lambda}$ indicates that the best model is {\it poly5} although $C_a$ selects {\it poly3} every time. 
In multiplicative correction, {\it poly3} is selected by $C_a$ many times while $C_\lambda$ and $C_{a\cap\lambda}$ select {\it poly5}.  
From the definition of $C_{a\cap\lambda}$, it is understood that {\it poly5} is selected in a fitness and smoothness context. 
On the other hand, AIC and TIC choose {\it poly3} and {\it poly5}, respectively. 
We note that the use of AIC might not be appropriate in this situation since the prepared model does not include the true $f$ and, hence, we place more confidence in TIC. 
On the other hand, when we select the parametric model only by the maximum of the log-likelihood, {\it poly5} was chosen 1000 times. 
Therefore, it seems that the bias correction in AIC is too strong in this situation. 

In Table 6, the ${\rm ISB}$, V and MISE of the SPSE are tabulated. 
In both corrections, the SPSE with {\it poly5} and {\it poly6} have overwhelmingly small ${\rm ISB}$s compared with those of {\it poly1-poly4}.  
As $C_a$ and $C_\lambda$ focus on bias reduction, it appears that $C_{a\cap\lambda}$ chooses {\it poly5} because it often has a small bias. 
On the other hand, {\it poly3} has good V and MISE, while {\it poly5} does not. 
For ${\rm ISB}$, V and MISE, the values of the SPSE is smaller than those of the SLLE, respectively. 
\\   

\begin{table}
\begin{center}
\caption{Results of integrated squared bias, variance and mean integrated squared bias of Example 3. All entries for ${\rm ISB}$, V and MISE are $10^3$ times their actual values.}
\scalebox{0.9}[0.9]{
\begin{tabular}{c|c|ccc|ccc|ccc}
\hline
\multicolumn{2}{c}{$n=75$}&\multicolumn{3}{|c|}{SPSE1}&\multicolumn{3}{c}{SPSE3}&\multicolumn{3}{|c}{SLLE}\\
\cline{3-8}
\hline
method&model&${\rm ISB}$&V&MISE&${\rm ISB}$&V&MISE&${\rm ISB}$&V&MISE\\
\hline
&{\it poly1}& 
0.061&1.330&1.390&
0.065&1.237&1.302&
0.645&6.529& 7.175
\\
&{\it poly2}& 
0.017&1.326&1.343&
0.007&1.231&1.238&
0.734&6.298&7.032
\\
$\gamma=0$&{\it poly3}& 
0.017&1.325&1.343&
0.007&1.230&1.237&
0.249&6.292&6.541
\\
&{\it poly4}& 
0.062&1.343&1.405&
0.066&1.251&1.317&
0.608&6.732&7.340
\\
&{\it poly5}& 
0.003&1.377&1.380&
0.002&1.285&1.287&
0.017&4.863&4.880
\\
&{\it poly6}& 
0.004&1.435&1.440&
0.002&1.350&1.354&
0.019&5.552&5.571
\\
\hline
&{\it poly1}& 
0.062&1.337&1.399&
0.068&1.246&1.314&
 1.084&6.167&7.251
\\
&{\it poly2}& 
0.024&1.328&1.352&
0.021&1.235&1.256&
0.997&6.186&7.183
\\
$\gamma=1$&{\it poly3}& 
0.030&1.325&1.342&
0.014&1.233&1.248&
0.314&6.279&6.593
\\
&{\it poly4}& 
0.072&1.348&1.419&
0.078&1.258&1.336&
0.420&6.476&6.896
\\
&{\it poly5}& 
0.003&1.380&1.383&
0.002&1.290&1.292&
0.023&4.925&4.949
\\
&{\it poly6}& 
0.003&1.438&1.441&
0.002&1.353& 1.355&
0.025 & 5.528&5.553
\\
\hline
\hline
\multicolumn{2}{c}{$n=75$}&\multicolumn{3}{|c|}{NPSE1}&\multicolumn{3}{|c}{NPSE3}&\multicolumn{3}{|c}{NLLE}\\
\hline
\multicolumn{2}{c|}{Fully nonparametric}&ISB&V&MISE&ISB&V&MISE&ISB&V&MISE\\
\cline{3-11}
\multicolumn{2}{c|}{method}
&
0.061&1.330&1.390&
0.065&1.237&1.302&
0.645&6.529& 7.175
\\
\hline
\end{tabular}
}
\end{center}
\end{table}

\noindent{\bf Example 4}
\quad The true model is $f(x)=4+e^{-x}\{\sin(7\pi x)+2\cos(3\pi x)\}$ and the error variance is $\sigma^2(x)=0.5$. 
The parametric model is 
\begin{eqnarray*}
f(x|\vec{\beta})=
\left\{
\begin{array}{ll}
\beta_0+e^{-x}\{\beta_1+\beta_2\sin(7\pi x)+\beta_3\cos(3\pi x)\},&f_1={\it sincos},\\
\beta_0+e^{-x}\{\beta_1+\beta_2\sin(7\pi x)\},&f_2={\it sin},\\
\beta_0+e^{-x}\{\beta_1+\beta_2\cos(3\pi x)\},&f_3={\it cos},\\
\beta_0+e^{-x}\{\beta_1+\beta_2x\},&f_4={\it poly1},\\
\beta_0+e^{-x}\{\beta_1+\beta_2x+\cdots+\beta_5x^4\},&f_5={\it poly4},\\
\beta_0+e^{-x}\{\beta_1+\beta_2x+\cdots+\beta_9x^8\},&f_6={\it poly8}
\end{array}
\right.
\end{eqnarray*}
The function {\it sincos} corresponds to the true function. 

\begin{table}
\begin{center}
\caption{The results of parametric model selection in Example 4.}
\begin{tabular}{c|c|ccc|ccc|cc}
\hline
\multicolumn{2}{c}{$n=50$}&\multicolumn{3}{|c|}{SPSE1}&\multicolumn{3}{c|}{SPSE3}&\multicolumn{2}{c}{IC}\\
\cline{3-8}
\hline
method&model&$C_a$&$C_\lambda$&$C_{a\cap\lambda}$&$C_a$&$C_\lambda$&$C_{a\cap\lambda}$&AIC&TIC\\
\hline
&{\it sin}& 
997&998&992&
987&996&972&
1&0
\\
$\gamma=0$&{\it cos}& 
3&2&0&
7&4&17&
602&11
\\
&{\it poly1}& 
0&0&0&
1&0&0&
0&0
\\
&{\it poly4}& 
0&0&2&
0&0&0&
397&902
\\
&{\it poly8}& 
0&0&3&
0&0&0&
0&87
\\
\hline
&{\it sin}& 
887&823&686&
887&821&791&
1&0
\\
$\gamma=1$&{\it cos}& 
77&17&1&
77&114&43&
602&11
\\
&{\it poly1}& 
0&11&37&
0&0&0&
0&0
\\
&{\it poly4}& 
0&56&109&
0&23&93&
397&902
\\
&{\it poly8}& 
0&47&88&
0&14&28&
0&87
\\
\hline
\end{tabular}
\end{center}
\end{table}

In Table 7, the results of the parametric model selection are tabulated. 
The {\it sincos}, corresponding to the true $f$, was not included in the model selection since it should be chosen frequently.
In both corrections, $\gamma=0, 1$, {\it sin} was chosen by $C_a$, $C_\lambda$ and $C_{a\cap\lambda}$ most often. 
On the other hand, TIC selected {\it poly4}, and AIC selected {\it cos} and {\it poly4} quit often. 

In Table 8, the ${\rm ISB}$, V and MISE of the estimators are shown.   
In both corrections, $\gamma=0,1$, the behavior of the SPSE with {\it sin} is superior than that of the SPSE with any other model except {\it sincos}. 
We observe that the SPSE with the initial parametric model selected by $C_{a\cap\lambda}$ shows better behavior than that with the model selected by information criteria.

Furthermore it can be seen that ${\rm ISB}$, V and MISE of the SLLE with {\it sincos} are significantly smaller than those of the SPSE with any parametric model. 
On the other hand, if we use incorrect models (other than {\it sincos}) in the SLLE, then the ${\rm ISB}$, V and MISE of the SLLE are larger than those of the SPSE. 
\\

\begin{table}
\begin{center}
\caption{Results of integrated squared bias, variance and mean integrated squared error for Example 4. All entries for ${\rm ISB}$,V and MISE are $10^3$ times their actual values.}
\scalebox{0.88}[0.9]{
\begin{tabular}{c|c|ccc|ccc|ccc}
\hline
\multicolumn{2}{c}{$n=50$}&\multicolumn{3}{|c|}{SPSE1}&\multicolumn{3}{c}{SPSE3}&\multicolumn{3}{|c}{SLLE}\\
\cline{3-8}
\hline
method&model&ISB&V&MISE&ISB&V&MISE&ISB&V&MISE\\
\hline
&{\it sincos}& 
0.051&87.361&87.412&
0.041&81.564&81.605&
0.025&64.752&64.777
\\
&{\it sin}& 
2.689&86.891&89.580&
3.270&81.053&84.323&
15.416 & 85.149&100.566
\\
$\gamma=0$&{\it cos}& 
17.206&87.095&104.302&
13.195&86.217& 99.411&
21.039&92.615&113.654
\\
&{\it poly1}& 
19.095&89.314&108.409&
13.950&88.674&102.624&
25.920&104.183&130.103
\\
&{\it poly4}& 
15.990&91.930&107.920&
11.733&90.234&101.967&
25.716&106.923&132.639
\\
&{\it poly8}& 
16.492&94.013&110.505&
11.896&92.078&103.975&
22.992& 108.436&131.428
\\
\hline
&{\it sincos}& 
0.051&88.492&88.543&
0.040&82.978&83.018&
0.025&63.735& 63.761
\\
&{\it sin}& 
4.968&87.858&92.825&
6.245&82.485&88.730&
18.049&83.225&101.274
\\
$\gamma=1$&{\it cos}& 
17.269&89.904&107.174&
12.525&89.165&101.690&
 20.751 & 92.491&113.242
\\
&{\it poly1}& 
18.981&90.991& 109.972&
13.360&90.053&103.413&
28.430&94.194&122.624
\\
&{\it poly4}& 
15.451& 94.073&109.524&
11.155&92.079&103.233&
24.959&106.714&131.673
\\
&{\it poly8}& 
15.534&95.991&111.525&
10.936&93.630& 104.566&
26.838& 106.554&133.392
\\
\hline
\hline
\multicolumn{2}{c}{$n=50$}&\multicolumn{3}{|c|}{NPSE1}&\multicolumn{3}{|c}{NPSE3}&\multicolumn{3}{|c}{NLLE}\\
\hline
\multicolumn{2}{c|}{Fully nonparametric}&ISB&V&MISE&ISB&V&MISE&ISB&V&MISE\\
\cline{3-11}
\multicolumn{2}{c|}{method}
&
18.884&88.770&107.653&
13.878&88.201&102.079&
26.859&93.344&120.204
\\
\hline
\end{tabular}
}
\end{center}
\end{table}

\noindent {\bf Remark 7}
\quad In all examples, we also compared the behavior of the SPSE and the SLLE under the conditions that $K_n$ is equal to the ceiling of $h_n^{-1}$ and that $\lambda_n=n^p/n^{2p+1}$. 
From these results, we have confirmed that
the ${\rm ISB}$ of the SPSE is smaller than that of the SLLE for each parametric model. 
In contrast, the V and MISE of the SPSE are larger than those of the SLLE. 
Thus, it seems that the SPSE produces overfitting.


\section{Discussion}

We have discussed the SPSE using a parametric model. 
We see that the SPSE has better behavior than the NPSE, provided we can choose a good $f(x|\vec{\beta})$ in the first parametric step.
A similar conclusion can be drawn for the semiparametric regression spline estimator by letting $\lambda_n=0$. 

In the field of kernel smoothing, Fan et al. (2009) noted that the semiparametric local polynomial estimator can also be constructed in the additive model (Hastie and Tibshirani (1990)).
The reason for this is the asymptotic result of nonparametric kernel regression in the additive model, which has previously been developed by Ruppert and Opsomer (1997) and Opsomer (2000). 
On the other hand, it appears that the asymptotic results for the penalized spline estimator have still not been sufficiently investigated in comparison to kernel smoothing. 
While it is beyond the scope of this paper, this semiparametric approach with a penalized spline can be also extended to the generalized linear model.
In this sense, there are still many topics that should be examined in theoretical studies of the penalized spline method. 


\section*{Appendix}


For a matrix $A_n=(a_{ij,n})_{ij}$, if $\displaystyle\max_{i,j}\{n^{\alpha}|a_{ij,n}|\}=O_{P}(1)(o_{P}(1))$, then it is written as $a_n=O_{P}(n^{-\alpha}\vec{1}\vec{1}^\prime)(o_{P}(n^{-\alpha}\vec{1}\vec{1}^\prime))$. 
When $A_n$ is vector, define $A_n=O_P(n^{-\alpha}\vec{1}) (o_P(n^{-\alpha}\vec{1}))$ like a matrix case.
This notation will be used for matrices with fixed sizes and sizes depending on $n$.
For the proofs of Proposition 1, Theorems 1-2 and Corollary 1, we define $\Lambda_{n}=n^{-1}\Lambda$. 
We need additional lemmas as follows.  

\vspace{5mm}

\begin{lemma}\label{Lam}
Let $A=(a_{ij})_{ij}$ be $(K_n+p)$ matrix. 
Assume that $K_n\rightarrow \infty$ as $n\rightarrow \infty$, $A=O_P(K_n^\alpha\vec{1}\vec{1}^\prime)$. 
Then $A\Lambda_n^{-1}=O(K_n^{1+\alpha}\vec{1}\vec{1}^\prime)$
\end{lemma}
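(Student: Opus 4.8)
The whole statement reduces to a single fact about the penalized Gram matrix, namely $\|\Lambda_n^{-1}\|_\infty=O_P(K_n)$, where $\|M\|_\infty=\max_i\sum_j|m_{ij}|$ denotes the maximal absolute row sum. Recall $\Lambda_n=n^{-1}\Lambda=n^{-1}(Z^\prime Z+\lambda_n Q_m)$. First I would note that $n^{-1}Z^\prime Z$ is banded with bandwidth $p$ and, by the law of large numbers together with Assumption 2 and the partition-of-unity property of the equispaced $B$-spline basis (each $B^{[p]}_k$ is bounded by $1$ with support of length $(p+1)K_n^{-1}$), it converges entrywise to $G(q)$, with $(n^{-1}Z^\prime Z)_{ij}=O_P(K_n^{-1})$ uniformly. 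Since $Q_m=D_m^\prime D_m$ with $D_m$ the banded $m$th difference matrix whose entries are $O(1)$, and since Assumption 6 gives $\lambda_n=o(nK_n^{-1})$, the penalty contribution satisfies $n^{-1}\lambda_n Q_m=o_P(K_n^{-1}\vec{1}\vec{1}^\prime)$ entrywise; hence $\Lambda_n=G(q)+o_P(K_n^{-1}\vec{1}\vec{1}^\prime)$ with both pieces banded.

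For the bound $\|\Lambda_n^{-1}\|_\infty=O_P(K_n)$ I would use the standard properties of equispaced $B$-spline Gram matrices: $G(q)$ is banded, is a stable (well-conditioned) basis matrix, and its smallest eigenvalue is of exact order $K_n^{-1}$ (de Boor (2001); see also Claeskens et al. (2009)). A classical bound on inverses of banded, well-conditioned matrices then gives geometric off-diagonal decay, $|(G(q)^{-1})_{ij}|\le CK_n\rho^{|i-j|}$ for some $\rho\in(0,1)$, so that $\|G(q)^{-1}\|_\infty=O(K_n)$; because the perturbation $n^{-1}\lambda_n Q_m$ is of smaller order than the spectral gap of $G(q)$, the same order bound survives for $\Lambda_n^{-1}$. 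This is essentially what is used in the proof of Theorem 2(a) of Claeskens et al. (2009), which I would cite rather than reprove. Finally, $\Lambda_n$ is symmetric, so its maximal absolute column sum equals $\|\Lambda_n^{-1}\|_\infty=O_P(K_n)$.

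It then remains only to assemble the estimate. The hypothesis $A=O_P(K_n^\alpha\vec{1}\vec{1}^\prime)$ means $\max_{i,k}|a_{ik}|=O_P(K_n^\alpha)$, so for every $(i,j)$,
\[
|(A\Lambda_n^{-1})_{ij}|=\Bigl|\sum_{k=1}^{K_n+p}a_{ik}(\Lambda_n^{-1})_{kj}\Bigr|\le\Bigl(\max_{i,k}|a_{ik}|\Bigr)\sum_{k=1}^{K_n+p}|(\Lambda_n^{-1})_{kj}|=O_P(K_n^\alpha)\cdot O_P(K_n)=O_P(K_n^{1+\alpha}),
\]
uniformly in $(i,j)$, that is $A\Lambda_n^{-1}=O_P(K_n^{1+\alpha}\vec{1}\vec{1}^\prime)$ (as stated the conclusion should read $O_P$, matching the randomness present in $A$ and in $\Lambda_n^{-1}$). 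The only genuinely non-bookkeeping step is the bound $\|\Lambda_n^{-1}\|_\infty=O(K_n)$, which is where one pays for the near-singularity of the $B$-spline Gram matrix and must check that the difference penalty does not degrade its conditioning; everything else is a single H\"older-type inequality applied row by row.
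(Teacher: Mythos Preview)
Your proposal is correct and, in substance, follows the same route as the paper: the paper does not give a self-contained proof but simply states that Lemmas~1 and~2 ``are shown by fundamental properties of $B$-spline (see, Claeskens et al.\ (2009) and Zhou et al.\ (1998)),'' and your argument unpacks exactly those properties---the $O(K_n^{-1})$ scaling of the banded Gram matrix $G(q)$, the $O_P(K_n)$ bound on $\|\Lambda_n^{-1}\|_\infty$ via geometric off-diagonal decay, and the negligibility of the penalty under Assumption~6---before closing with the elementary row-sum inequality. Your remark that the conclusion should read $O_P$ rather than $O$ is also well taken.
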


\vspace{5mm}

\begin{lemma}\label{spinte}
Let $g : \mathbb{R}\rightarrow \mathbb{R}$ be any function with $\displaystyle\sup_{x\in\mathbb{R}}\{g(x)\}<\infty$. 
Then, $\int_0^1 B_i(u)g(u) du =O(K_n^{-1})$ and 
$
\int_0^1 B_i(u)B_j(u)g(u) du =O(K_n^{-1}).
$
\end{lemma}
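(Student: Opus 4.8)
The plan is to reduce both estimates to a single bound on the Lebesgue measure of the support of one $B$-spline basis function. I would invoke three standard properties of the marginal $B$-spline basis of degree $p$ with equispaced knots of mesh $K_n^{-1}$ (see de Boor (2001)): each $B_i$ is nonnegative with $B_i(u)\le 1$ for all $u$; the support of $B_i$ is an interval $I_i$ of length $(p+1)K_n^{-1}$; and the basis forms a partition of unity, $\sum_i B_i(u)=1$. (In fact $\int_{\mathbb{R}}B_i(u)\,du=K_n^{-1}$ exactly, but only the crude support-length bound is needed.)

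First I would set $M=\sup_{x\in\mathbb{R}}|g(x)|<\infty$. For the first integral, since $B_i\ge 0$ and vanishes outside $I_i$,
\[
\Bigl|\int_0^1 B_i(u)g(u)\,du\Bigr|\le M\int_{I_i\cap[0,1]}B_i(u)\,du\le M\,|I_i|=(p+1)M\,K_n^{-1}=O(K_n^{-1}),
\]
and the bound is uniform in $i$; intersecting the domain with $[0,1]$ only shrinks the integral, so the boundary indices $i$ near $1$ and $K_n+p$ need no separate treatment.

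Second, for the product integral I would use $0\le B_i(u)B_j(u)\le B_i(u)$, which holds because $B_j(u)\in[0,1]$; the same computation then gives
\[
\Bigl|\int_0^1 B_i(u)B_j(u)g(u)\,du\Bigr|\le M\int_0^1 B_i(u)\,du\le (p+1)M\,K_n^{-1}=O(K_n^{-1}).
\]
Equivalently, the product is identically zero unless $|i-j|\le p$, and otherwise supported on an interval of length at most $(p+1)K_n^{-1}$, giving the same conclusion.

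No genuine obstacle arises here; the only points that need care are using the partition-of-unity normalization of the basis (so that $B_i\le 1$, rather than an $L^1$-"density" normalization) and observing that restricting the domain of integration to $[0,1]$ cannot increase the integral. This lemma is the workhorse supplying the entrywise $O(K_n^{-1})$ bounds on $G(q)$, $n^{-1}Z^\prime Z$, $G(\sigma,\beta,\gamma,q)$ and related matrices used throughout the proofs of Proposition \ref{fix} and Theorems \ref{est} and \ref{norm}.
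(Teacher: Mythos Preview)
Your argument is correct and is exactly what the paper has in mind: the paper does not spell out a proof but simply remarks that Lemmas~\ref{Lam} and~\ref{spinte} follow from ``fundamental properties of $B$-spline (see, Claeskens et al.\ (2009) and Zhou et al.\ (1998))'', and the properties you invoke (nonnegativity, $B_i\le 1$, support of length $(p+1)K_n^{-1}$) are precisely those fundamental facts. One small point: the lemma as stated only assumes $\sup_x g(x)<\infty$, whereas you take $M=\sup_x|g(x)|$; in every application in the paper $g$ is nonnegative (e.g.\ $g=q$ or $g=\sigma^2 q/f(\cdot|\vec{\beta})^{2\gamma}$), so this reading is the intended one and causes no difficulty.
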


Lemmas \ref{Lam} and \ref{spinte} are shown by fundamental properties of $B$-spline(see, Claeskens et al. (2009) and Zhou et al. (1998)).

\vspace{5mm}

\begin{proof}[Proof of Proposition \ref{fix}]

First we calculate the asymptotic expectation of $\hat{r}_\gamma(x,\vec{\beta}_0)$: 
\begin{eqnarray*}
E[\hat{r}_\gamma(x,\vec{\beta}_0)|\vec{X}_n]=f(x|\vec{\beta}_0)^\gamma\vec{B}(x)^\prime \Lambda^{-1}Z^\prime E[\vec{r}_\gamma|\vec{X}_n],
\end{eqnarray*}
where 
$$
E[\vec{r}_\gamma|\vec{X}_n]=\left(\frac{f(x_1)-f(x_1|\vec{\beta}_0)}{f(x_1|\vec{\beta}_0)^\gamma}\ \cdots\ \frac{f(x_n)-f(x_n|\vec{\beta}_0)}{f(x_n|\vec{\beta}_0)^\gamma}\right)^\prime
$$
By using Theorem 2 (a) of Claeskens et al. (2009), if  $\{f(x)-f(x|\vec{\beta}_0)\}/f(x|\vec{\beta}_0)^\gamma$ is regarded as regression function, we have
$$
E[\hat{r}_\gamma(x,\vec{\beta}_0)|\vec{X}_n]=\frac{f(x)-f(x|\vec{\beta}_0)}{f(x|\vec{\beta}_0)^\gamma}+b_{a1}(x|\vec{\beta}_0,\gamma)+b_{\lambda 1}(x|\vec{\beta}_0,\gamma)+o_P(K_n^{-(p+1)})+o_P(\lambda_nK_nn^{-1}),
$$
where 
$b_{\lambda 1}(x|\vec{\beta}_0,\gamma)=-(\lambda_n/n)\vec{B}(x)^\prime G(q)^{-1}Q_m \vec{b}^*(\vec{\beta}_0,\gamma)$.
Therefore, the expectation of $\hat{f}_0(x,\gamma)$ can be written as 
\begin{eqnarray*}
E[\hat{f}_0(x,\gamma)|\vec{X}_n]
&=&f(x|\vec{\beta}_0)+f(x|\vec{\beta}_0)^\gamma E[\hat{r}_\gamma(x,\vec{\beta}_0)|\vec{X}_n]\\
&=&f(x)+f(x|\vec{\beta}_0)^\gamma\{b_{a1}(x|\vec{\beta}_0,\gamma)+b_{\lambda 1}(x|\vec{\beta}_0,\gamma)\}\\
&&+o_P(K_n^{-(p+1)})+o_P(\lambda_nK_nn^{-1})\\
&=&f(x)+b_{a}(x|\vec{\beta},\gamma)+b_{\lambda }(x|\vec{\beta},\gamma)+o_P(K_n^{-(p+1)})+o_P(\lambda_nK_nn^{-1}).
\end{eqnarray*}
Next we show the asymptotic variance of $\hat{f}_0(x,\gamma)$. 
It is easy to see that  
\begin{eqnarray*}
V[\hat{f}_0(x,\gamma)|\vec{X}_n]&=&f(x|\vec{\beta})^{2\gamma} \vec{B}(x)^\prime \Lambda^{-1}Z^\prime V[\vec{r}_\gamma|\vec{X}_n]Z\Lambda^{-1}\vec{B}(x)\\
&=&\frac{f(x|\vec{\beta})^{2\gamma} }{n^2}\vec{B}(x)^\prime \Lambda_n^{-1}Z^\prime \left(\diag\left[\frac{\sigma^2(x_1)}{f(x_1|\vec{\beta})^{2\gamma}},\cdots,\frac{\sigma^2(x_n)}{f(x_n|\vec{\beta})^{2\gamma}}\right]\right)Z\Lambda_n^{-1}\vec{B}(x).
\end{eqnarray*}
The $(i,j)$-component of $n^{-1}Z^\prime V[\vec{r}_\gamma|\vec{X}_n]Z$ can be calculated as 
\begin{eqnarray*}
&&\left(\frac{1}{n}Z^\prime \left(\diag\left[\frac{\sigma^2(x_1)}{f(x_1|\vec{\beta})^2},\cdots,\frac{\sigma^2(x_n)}{f(x_n|\vec{\beta})^2}\right]\right)Z\right)_{ij}\\
&&=\frac{1}{n}\sum_{k=1}^n B_{-p+i}^{[p]}(x_k)B_{-p+j}^{[p]}(x_k)\frac{\sigma^2(x_k)}{f(x_k|\vec{\beta})^2}\\
&&=\int_0^1 B_{-p+i}^{[p]}(u)B_{-p+j}^{[p]}(u)\frac{\sigma^2(u)q(u)}{f(u|\vec{\beta})^2}du(1+o_P(1)).
\end{eqnarray*}
Hence, we obtain
\begin{eqnarray*}
V[\hat{f}_0(x,\gamma)|\vec{X}_n]=\frac{f(x|\vec{\beta})^{2\gamma}}{n}\vec{B}(x)^\prime G(q)^{-1}G(\sigma,\beta,\gamma,q) G(q)^{-1}\vec{B}(x)+o_P(K_nn^{-1}).
\end{eqnarray*}
\end{proof}

Before proof of Theorem \ref{est}, we define some symbols. 
For any function $g(\cdot|\vec{\beta})$ which is smooth for $\vec{\beta}$, 
$$
g^{(1)}(\cdot|\vec{\beta}_0)=\frac{\partial g(\cdot|\vec{\beta})}{\partial \vec{\beta}}\Bigl{|}_{\vec{\beta}=\vec{\beta}_0},\ \ 
g^{(2)}(\cdot|\vec{\beta}_0)=\frac{\partial^2 g(\cdot|\vec{\beta})}{\partial \vec{\beta}\partial \vec{\beta}^\prime }\Bigl|_{\vec{\beta}=\vec{\beta}_0}.
$$
We use Taylor expansion of $g(\cdot|\hat{\vec{\beta}})$ around $\vec{\beta}_0$, giving
\begin{eqnarray}
g(\cdot|\hat{\vec{\beta}})
=
g(\cdot|\vec{\beta}_0)+
g^{(1)}(\cdot|\vec{\beta}_0)^\prime(\hat{\vec{\beta}}-\vec{\beta}_0)+\frac{1}{2}(\hat{\vec{\beta}}-\vec{\beta}_0)^\prime g^{(2)}(\cdot|\vec{\beta}_0) (\hat{\vec{\beta}}-\vec{\beta}_0)+o_P(n^{-1}).\label{Tay}
\end{eqnarray}

\begin{proof}[Proof of Theorem \ref{est}]

We first note from (\ref{semiest}) that the SPSE is expressed as
$$
\hat{f}(x,\gamma)=f(x|\hat{\vec{\beta}})+\vec{B}(x)^\prime\Lambda^{-1}Z^\prime \vec{r}_\gamma(\hat{\vec{\beta}}),
$$
where 
\begin{eqnarray*}
\vec{r}_\gamma(\hat{\vec{\beta}})
&=&(r_\gamma(y_1|\hat{\vec{\beta}})\ \cdots\ r_\gamma(y_n|\hat{\vec{\beta}}))^\prime
\end{eqnarray*}
and $r_\gamma(y_i|\hat{\vec{\beta}})=f(x|\hat{\vec{\beta}})^\gamma\{y_i-f(x_i|\hat{\vec{\beta}})\}/f(x_i|\hat{\vec{\beta}})^\gamma$.

Taylor expansion yields that
\begin{eqnarray}
\hat{f}(x,\gamma)=\hat{f}_{0}(x,\gamma)+\hat{f}^{(1)}(x,\gamma)^{\prime}(\hat{\vec{\beta}}-\vec{\beta}_0)+\frac{1}{2}(\hat{\vec{\beta}}-\vec{\beta}_0)^{\prime} \hat{f}^{(2)}(x,\gamma)(\hat{\vec{\beta}}-\vec{\beta}_0)+o_P(n^{-1}), \label{fhatex}
\end{eqnarray}
where
$$
\hat{f}^{(1)}(x,\gamma)=f^{(1)}(x|\vec{\beta}_{0})+\sum_{j=1}^{n}\left\{\vec{B}(x_{j})^{\prime}\Lambda^{-1}\vec{B}(x)\right\}r_{\gamma}^{(1)}(y_{j}|\vec{\beta}_{0})
$$
and
$$
\hat{f}^{(2)}(x,\gamma)=f^{(2)}(x|\vec{\beta}_{0})+\sum_{j=1}^{n}\left\{\vec{B}(x_{j})^{\prime}\Lambda^{-1}\vec{B}(x)\right\}r_{\gamma}^{(2)}(y_{j}|\vec{\beta}_{0}).
$$
First we derive the asymptotic expectation of $\hat{f}(x,\gamma)$.
The term $E[\hat{f}_{0}(x,\gamma)|\vec{X}_{n}]$ has already been derived in Proposition 1.
Direct calculations with repeated use of (\ref{ex}) and Lemmas 1 and 2 yield that 
\begin{eqnarray*}
\frac{1}{n}\sum_{\alpha=1}^n E\left[f^{(1)}(x|\vec{\beta}_{0})^\prime \left.\left\{I(x_\alpha,Y_\alpha)+\frac{d}{n}+\delta_n\right\}\right|\vec{X}_n\right]&=&\frac{1}{n}E[f^{(1)}(x|\vec{\beta}_{0})^\prime d|\vec{X}_n]+O(n^{-2})\\
&=&O(n^{-1})
\end{eqnarray*}
and 
\begin{eqnarray*}
&&\frac{1}{n}\sum_{\alpha=1}^n \sum_{j=1}^{n}\left\{\vec{B}(x_{j})^{\prime}\Lambda^{-1}\vec{B}(x)\right\}E\left[r^{(1)}_\gamma(Y_j|\vec{\beta}_{0})^\prime \left.\left\{I(x_\alpha,Y_\alpha)+\frac{d}{n}+\delta_n\right\}\right|\vec{X}_n\right]\\
&&=\frac{1}{n}\sum_{j=1}^{n}\left\{\vec{B}(x_{j})^{\prime}\Lambda^{-1}\vec{B}(x)\right\}E\left[r^{(1)}_\gamma(Y_j|\vec{\beta}_{0})^\prime \left.\left\{I(x_j,Y_j)+\frac{d}{n}\right\}\right|\vec{X}_n\right]+O_P(n^{-2})\\
&&=O_P(n^{-1}).
\end{eqnarray*}
Hence we obtain 
\begin{eqnarray}
E[\hat{f}^{(1)}(x,\gamma)^{\prime}(\hat{\vec{\beta}}-\vec{\beta}_0) | \vec{X}_{n}]=O_P(n^{-1}). \label{ex1}
\end{eqnarray}
Analogously, 
\begin{eqnarray}
E[(\hat{\vec{\beta}}-\vec{\beta}_0)^{\prime} \hat{f}^{(2)}(x,\gamma)(\hat{\vec{\beta}}-\vec{\beta}_0) | \vec{X}_{n}]=O_P(n^{-1}) \label{ex2}
\end{eqnarray}
can be also shown. 
(\ref{ex1}) and (\ref{ex2}) are smaller order than the bias terms of $\hat{f}_{0}(x,\gamma)$.
Therefore the bias of $\hat{f}(x,\gamma)$ is essentially dominated by the bias of $\hat{f}_{0}(x,\gamma)$.

Next we turn to the variance of $\hat{f}(x,\gamma)$.
It follows from direct evaluation using (\ref{ex}) that
$$
V[\hat{f}^{(1)}(x,\gamma)^{\prime}(\hat{\vec{\beta}}-\vec{\beta}_0) | \vec{X}_{n}]=O_P(n^{-1}).
$$
And simple but tedious calculations finally yield
$$
V[(\hat{\vec{\beta}}-\vec{\beta}_0)^{\prime} \hat{f}^{(2)}(x,\gamma)(\hat{\vec{\beta}}-\vec{\beta}_0) | \vec{X}_{n}]=O_P(n^{-2}).
$$
All terms of relating to covariance appeared from the right hand side of (\ref{fhatex}) can be shown to be negligible order by Cauchy-Schwarz inequality. 
Hence the variance of $\hat{f}(x,\gamma)$ is dominated by that of $\hat{f}_{0}(x,\gamma)$.
\end{proof}

\begin{proof}[Proof of Theorem \ref{norm}]
Let $\hat{r}(x,\gamma)=\vec{B}(x)^\prime\Lambda^{-1}Z^\prime \vec{r}_\gamma(\hat{\vec{\beta}})$. 
Then the semiparametric estimator can be written as 
$\hat{f}(x,\gamma)=f(x|\hat{\vec{\beta}})+\hat{r}(x,\gamma)$. 
We now prove 
\begin{eqnarray}
\frac{\hat{f}(x,\gamma)-E[\hat{f}(x,\gamma)|\vec{X}_n]}{\sqrt{V[\hat{f}(x,\gamma)|\vec{X}_n]}}\xrightarrow {D}N(0,1) \label{fnorm}
\end{eqnarray}
by using Lyapunov theorem. 
First, from $\sqrt{n}(f(x|\hat{\vec{\beta}})-E[f(x|\hat{\vec{\beta}})|\vec{X}_n])=O_P(1)$ and $V[\hat{f}(x,\gamma)|\vec{X}_n]=O(K_nn^{-1})$, 
we have 
$$
\frac{f(x|\hat{\vec{\beta}})-E[f(x|\hat{\vec{\beta}})|\vec{X}_n]}{\sqrt{V[\hat{f}(x,\gamma)|\vec{X}_n]}} \xrightarrow {P} 0.
$$
Therefore, (\ref{fnorm}) can be obtained, provided that
\begin{eqnarray}
\frac{\hat{r}(x,\gamma)-E[\hat{r}(x,\gamma)|\vec{X}_n]}{\sqrt{V[\hat{r}(x,\gamma)|\vec{X}_n]}}
\xrightarrow {D}N(0,1) \label{rnorm}
\end{eqnarray}
because $V[\hat{f}(x,\gamma)|\vec{X}_n]/V[\hat{r}(x,\gamma)|\vec{X}_n]\rightarrow 1 (n\rightarrow \infty)$. 
Furthermore, from the proof of Theorem \ref{est}, we obtain 
\begin{eqnarray*}
\frac{\hat{r}(x,\gamma)-\hat{r}_0(x,\gamma)}{\sqrt{V[\hat{r}(x,\gamma)|\vec{X}_n]}}
\xrightarrow {P} 0,\ \ {\rm as}\ \ n\rightarrow \infty
\end{eqnarray*}
and $V[\hat{r}(x,\gamma)|\vec{X}_n]/V[\hat{r}_0(x,\gamma)|\vec{X}_n]\rightarrow 1 (n\rightarrow \infty)$,
where 
\begin{eqnarray*}
\hat{r}_0(x,\gamma)=\vec{B}(x)^\prime\Lambda^{-1}Z^\prime \vec{r}_\gamma(\vec{\beta}_0)=f(x|\vec{\beta}_0)^\gamma\sum_{i=1}^{n} \{\vec{B}(x_{i})^{\prime}\Lambda^{-1}\vec{B}(x)\}\frac{ \{y_i-f(x_i|\vec{\beta}_0)\} }{f(x_i|\vec{\beta}_0)^\gamma}.
\end{eqnarray*}
From now on, we try to show 
\begin{eqnarray}
\frac{\hat{r}_0(x,\gamma)-E[\hat{r}_0(x,\gamma)|\vec{X}_n]}{\sqrt{V[\hat{r}_0(x,\gamma)|\vec{X}_n]}}\xrightarrow {D} N(0,1) \label{r0norm}
\end{eqnarray}
by applying the Lyapunov theorem.
First we see that
$$
\hat{r}_0(x,\gamma)-E[\hat{r}_0(x,\gamma)|\vec{X}_n]=f(x|\vec{\beta}_0)^\gamma\sum_{i=1}^{n} \{\vec{B}(x_{i})^{\prime}\Lambda^{-1}\vec{B}(x)\}\frac{\varepsilon_{i}}{f(x_i|\vec{\beta}_0)^\gamma}.
$$
And it is easily confirmed that
$$
f(x|\vec{\beta}_0)^\gamma\vec{B}(x)^\prime\Lambda^{-1}\vec{B}(x_i)=O_P(K_{n}n^{-1}).
$$
By above evaluations and the moment condition for $\varepsilon_{i}$, we have
\begin{eqnarray*}
&&E\left[  \left|f(x|\vec{\beta}_0)^\gamma\{\vec{B}(x_{i})^{\prime}\Lambda^{-1}\vec{B}(x)\}\frac{\varepsilon_{i}}{f(x_i|\vec{\beta}_0)^\gamma}\right|^{2+\delta}    \left| \vec{X}_{n}\frac{}{}\right.\right]\\
&&=
\frac{E[|f(x|\vec{\beta}_0)^\gamma\vec{B}(x)^\prime\Lambda^{-1}\vec{B}(x_i)\varepsilon_i|^{2+\delta}|\vec{X}_n]}{|f(x_i|\vec{\beta}_0)|^{\gamma(2+\delta)}} \\
&&=O_P\left(\frac{K_n^{2+\delta}}{n^{2+\delta}}\right).
\end{eqnarray*}
On the other hand, since
$B_n^{2}=V[\hat{r}_0(x,\gamma)|\vec{X}_n]=O_P(K_nn^{-1})$, 
we have 
$$
B_n^{2+\delta}=O_P\left(\left(\frac{K_n}{n}\right)^{(2+\delta)/2}\right).
$$
Then it follows that
\begin{eqnarray}
\ &\ &\frac{1}{B_n^{2+\delta}}\sum_{i=1}^n E\left[ \left| f(x|\vec{\beta}_0)^\gamma\{\vec{B}(x_{i})^{\prime}\Lambda^{-1}\vec{B}(x)\}\frac{\varepsilon_{i}}{f(x_i|\vec{\beta}_0)^\gamma} \right|^{2+\delta} \left|X_{i}\frac{}{}\right.  \right]  \nonumber \\
\ &\ &=O_P\left(n\left(\frac{K_n}{n}\right)^{2+\delta}\right) O_P\left(\left(\frac{K_n}{n}\right)^{-(2+\delta)/2}\right)\nonumber\\
\ &\ &=O_P\left(n\left(\frac{K_n}{n}\right)^{\frac{2+\delta}{2}}\right), \nonumber
\label{conde}
\end{eqnarray}
which tends to 0 in probability by $K_n=o(n^{1/2})$ and $\delta\geq 2$.
This assures the Lyapunov condition, so that (\ref{r0norm}) holds. 
Note that $b_a(x|\vec{\beta}_0,\gamma)=O(K_n^{-(p+1)})$, $b_\lambda(x|\vec{\beta}_0,\gamma)=O(\lambda_nK_nn^{-1})$ and  $V[\hat{f}(x,\gamma)|\vec{X}_n]=O(K_nn^{-1})$.
It results from these evaluations and the assumptions for the order of $K_{n}$ and $\lambda_{n}$ that
$$
\frac{ E[\hat{f}(x,\gamma)|\vec{X}_{n}]-f(x)-b_a(x|\vec{\beta}_0,\gamma)-b_\lambda(x|\vec{\beta}_0,\gamma) }{ \sqrt{V[\hat{f}(x,\gamma)|\vec{X}_n]} } \rightarrow 0,
$$
which completes the proof.
\end{proof}

\begin{proof}[Proof of Corollary \ref{pol}]

First, $f_q(x|\vec{\beta}_q)$ can be expressed as the linear combination of the $p$th $B$-spline basis. 
From the fundamental property of $B$-spline basis (see, p.95 of de Boor (2001)), actually, each $x^j$ can be written as 
$$
x^{p-j}=\sum_{k=-p+1}^{K_n}\frac{(-1)^j(p-j)!}{p!}\phi^{(j)}_{k,p}(0)B_k^{[p]}(x),\ \ j=p-q,\cdots,p,
$$ 
where $\phi_{k,p}(z)=(\kappa_{k}-z)\cdots(\kappa_{k+p-1}-z)$ and we have
\begin{eqnarray}
f_q(x|\vec{\beta}_q)&=&\beta_0+\beta_1x+\cdots+\beta_qx^q \nonumber\\
&=&\sum_{j=p-q}^p \beta_{p-j}x^{p-j}\nonumber\\
&=&\sum_{k=-p+1}^{K_n}\left\{\sum_{j=p-q}^p\beta_{p-j}\frac{(-1)^j(p-j)!}{p!}\phi^{(j)}_{k,p}(0)\right\}B_k^{[p]}(x). \label{polysp}
\end{eqnarray}
Note that (\ref{polysp}) consist for any $\vec{\beta}\in B \subseteq \mathbb{R}^{q+1}$.
The semiparametric penalized spline estimator is obtained by 
$
\hat{f}(x,0)=f_q(x|\hat{\vec{\beta}}_q)+\hat{r}_0(x,\hat{\vec{\beta}}_q).
$
Let $\hat{\vec{c}}=(\hat{c}_{-p+1}\ \cdots\ \hat{c}_{K_n})^\prime$ be the $(K_n+p)$ vector defined as  
$$
\hat{c}_k=\sum_{j=p-q}^p\hat{\beta}_{p-j}\frac{(-1)^j(p-j)!}{p!}\phi^{(j)}_{k,p}(0),\ \ k=-p+1,\cdots,K_n
$$ 
Then, we have $f_q(x|\hat{\vec{\beta}}_q)=\vec{B}(x)^\prime \hat{\vec{c}}$ and 
$$
\hat{r}_0(x,\hat{\vec{\beta}}_q)=\vec{B}(x)^\prime \hat{\vec{b}} =\vec{B}(x)^\prime (Z^\prime Z+\lambda_n Q_m)^{-1}Z^\prime (\vec{y}-Z\hat{\vec{c}}).
$$
Therefore, we have 
\begin{eqnarray}
\hat{f}(x,0)=f_q(x|\hat{\vec{\beta}}_q)+\hat{r}_0(x,\hat{\vec{\beta}}_q) =\vec{B}(x)^\prime \hat{\vec{c}}+\vec{B}(x)^\prime (Z^\prime Z+\lambda_n Q_m)^{-1}Z^\prime (\vec{y}-Z\hat{\vec{c}}). \label{fhatpoly}
\end{eqnarray}
When $\lambda_n=0$, meaning that $\hat{r}_0(x,\hat{\vec{\beta}}_q)$ is regression spline, (\ref{fhatpoly}) can be written as 
$$
\hat{f}(x,0)=\vec{B}(x)^\prime \hat{\vec{c}}+\vec{B}(x)^\prime (Z^\prime Z)^{-1}Z^\prime (\vec{y}-Z\hat{\vec{c}})=\vec{B}(x)^\prime (Z^\prime Z)^{-1}Z^\prime \vec{y}
$$
for all $p\geq 1$. 
So the semiparametric estimator and nonparametric estimator have the same form.
If $\lambda_n>0$, on the other hand,  
\begin{eqnarray*}
\hat{f}(x,0)=\vec{B}(x)^\prime \hat{\vec{c}}-\vec{B}(x)^\prime (Z^\prime Z+\lambda_n Q_m)^{-1}Z^\prime Z\hat{\vec{c}}
=\lambda_n\vec{B}(x)^\prime (Z^\prime Z+\lambda_n Q_m)^{-1} Q_m\hat{\vec{c}}
\end{eqnarray*}
does not become 0 unless $Q_m\hat{\vec{c}}=\vec{0}$. 
However as far as we use $(p,m)=(1,2)$ and equidistant knots, we obtain $Q_m\hat{\vec{c}}=\vec{0}$. 
The square matrix $Q_2$ of order $(K_n+p)$ has the form $Q_2=D_2^\prime D_2$, where $(K_n+p-2)\times (K_n+p)$ matrix $D_2$ is 
\begin{eqnarray*}
D_2=(d_{ij})_{ij}=
\left[
\begin{array}{cccccc}
1&-2&1&0&\cdots &0\\
0&1&-2&1&\ddots &\vdots\\
\vdots &\ddots &\ddots &\ddots &\ddots&\vdots\\
0&\cdots &0&1&-2&1
\end{array}
\right].
\end{eqnarray*}
We way only prove $D_2\hat{\vec{c}}=\vec{0}$. 
Because the $k$th component of $\hat{\vec{c}}$ is 
$$
\sum_{j=0}^p\hat{\beta}_{p-j}\frac{(-1)^j(p-j)!}{p!}\phi^{(j)}_{k,p}(0),
$$
we show that for $j=0,1$ and $p=1$,  
$$
\sum_{k=-p+1}^{K_n} d_{ik} \phi^{(j)}_{k,1}(0)=0,\ \ i=1,\cdots,K_n+p.
$$
By the definition of $d_{ik}$ and $\phi^{(j)}_{k,1}(z)=(\kappa_k-z)^{(j)}$, we have for $j=0$, 
\begin{eqnarray*}
\sum_{k=-p+1}^{K_n} d_{ik} \phi^{(0)}_{k,1}(0)
&=&d_{i,i} \kappa_i+d_{i,i+1} \kappa_{i+1}+d_{i,i+2} \kappa_{i+2}\\
&=&0.
\end{eqnarray*}
For $j=1$, we obtain $\sum_{k=-p+1}^{K_n} d_{ik} \phi^{(1)}_{k,1}(0)=0$. 
Therefore, $D_2\hat{\vec{c}}=\vec{0}$ was proven.
\end{proof}

\noindent {\bf References}
\vskip 1mm

\noindent Claeskens,G., Krivobokova,T. and Opsomer,J.D. (2009). Asymptotic properties of penalized spline estimators.\ $Biometrika.$ $\mathbf{96}$, 529-544.

\noindent  de Boor,C. (2001). $A\ Practical\ Guide\ to\ Splines$. Springer-Verlag.

\noindent Eilers,P.H.C. and Marx,B.D. (1996). Flexible smoothing with $B$-splines and penalties(with Discussion). $Statist.Sci$. {\bf 11}, 89-121.

\noindent  Fan,J., Wu,Y. and Feng,Y. (2009). Local quasi-likelihood with a parametric guide. {\it Ann. Statist.} {\bf 37} 4153-4183.

\noindent  Glad,I.K. (1998). Parametrically guided non-parametric regression. {\it Scand.J.Statist.} {\bf 25} 649-668.

\noindent  Hastie,T. and Tibshirani,R.(1990). {\it Generalized Additive Models}. London Chapman \& Hall.

\noindent  Hjort,N.L. and Glad,I.K. (1995). Nonparametric density estimation with a parametric start. {\it Ann. Statist.} {\bf 23} 882-904.

\noindent  Konishi,S. and Kitagawa,G. (2008). {\it Information Criteria and Statistical Modeling.} {Springer-Verlag,  New York.}

\noindent  Martins-Filho,C., Mishra,S. and Ullah,A. (2008). A class of improved parametrically guided nonparametric regression estimators. {\it Econometric Rev.} {\bf 27} 542-573.

\noindent  Naito,K. (2002). Semiparametric regression with multiplicative adjustment. {\it Communications in Statistics, Theory and Methods} {\bf 31} 2289-2309.

\noindent  Naito,K. (2004). Semiparametric density estimation by local $L_2$-fitting. {\it Ann. Statist.} {\bf 32} 1162-1191.

\noindent  Opsomer,J.D. (2000). Asymptotic properties of backfitting estimators.{\it J. Mult. Anal.} {\bf 73}, 166--79.

\noindent  Opsomer,J.D. and Ruppert,D. (1997). Fitting a bivariate additive model by local polynomial regression.{\it Ann. Statist.} {\bf 25}, 186-211.

\noindent  O'Sullivan,F. (1986). A statistical perspective on ill-posed inverse problems.{\it Statist. Sci.} {\bf 1}, 505--27.(with discussion).

\noindent  Ruppert,D., Sheather,S.J. and Wand,M.P. (1995). An effective bandwidth selector for local least squares regression. {\it J. Amer. Statist. Assoc.} {\bf 90} 1257-1270.

\noindent  Ruppert,D., Wand,M.P. and Carroll,R.J. (2003).\ {\it Semiparametric Regression}.\ Cambridge University Press.

\noindent  Zhou,S., Shen,X. and Wolfe,D.A. (1998). Local asymptotics for regression splines and confidence regions. {\it Ann. Statist.} {\bf 26}(5):1760-1782.

\end{document}